  \def\cup{\cupprod}
  \def\cap{\capprod}
  \def\bigcup{\bigcupprod}
  \def\bigcupdisjoint{\mathop{\kern10pt\raisebox{4pt}{$\cdot$}\kern-12pt\bigcup}\limits}
\renewcommand{\Im}{\ensuremath{\operatorname{Im}}}
\renewcommand{\emptyset}{\ensuremath{\varnothing}}
\DeclareMathOperator{\Ran}{ran}
\DeclareMathOperator{\Tr}{Tr}
\DeclareMathOperator{\idd}{id}
\DeclareMathOperator{\distt}{d}
\DeclareMathOperator{\intt}{int}
\renewcommand{\vec}[1]{\ensuremath{\boldsymbol{#1}}}
\newcommand{\id}{\ensuremath{\mathds{1}}}
\DeclareMathOperator{\esssup}{ess\,sup}
\DeclareMathOperator{\supp}{supp}
\DeclareMathOperator{\dist}{dist}
\numberwithin{equation}{section}
\newtheoremstyle{ttheorem}%
       {1.8ex\@plus1ex}                
       {2.1ex\@plus1ex\@minus.5ex}      
       {\itshape}           
       {0pt}                   
       {\bfseries}          
       {.}                  
       {.5em}               
       {}                
\newtheoremstyle{ddefinition}%
       {1.8ex\@plus1ex}                
       {2.1ex\@plus1ex\@minus.5ex}      
       {}           
       {0pt}                   
       {\bfseries}           
       {.}                  
       {.5em}               
       {}                
\newtheoremstyle{rremark}%
       {1.8ex\@plus1ex}                
       {2.1ex\@plus1ex\@minus.5ex}      
       {\normalfont}        
       {0pt}                   
       {\bfseries}           
       {.}                  
       {.5em}               
       {}                   
\theoremstyle{ttheorem}
\newtheorem{theorem}{Theorem}[section]
\newtheorem{lemma}[theorem]{Lemma}
\theoremstyle{ddefinition}
\theoremstyle{rremark}
\newtheorem{remark}[theorem]{Remark}
\newtheorem{myremarks}[theorem]{Remarks}
\newtheorem{myexamples}[theorem]{Examples}
\newenvironment{remarks}{\begin{myremarks}\begin{nummer}}%
    {\end{nummer}\end{myremarks}}
    {\end{nummer}\end{myexamples}}
\newcounter{numcount}
\newcommand{\labelnummer}{(\roman{numcount})}%
\providecommand{\showkeyslabelformat}[1]{\relax}        
\let\mysaveformat\showkeyslabelformat                   %
\def\myformat#1{\raisebox{-1.5ex}{\mysaveformat{#1}}}   %
\newenvironment{nummer}%
  {\let\curlabelspeicher\@currentlabel%
    \begin{list}{\textup{\labelnummer}}%
      {\usecounter{numcount}\leftmargin0pt%
        \topsep0.5ex\partopsep2ex\parsep0pt\itemsep0ex\@plus1\p@%
        \labelwidth2.5em\itemindent3.5em\labelsep1em%
      }%
    \let\saveitem\item%
    \def\item{\saveitem%
      \def\@currentlabel{\curlabelspeicher\kern.1em\labelnummer}}%
    \let\savelabel\label%
    \def\label##1{{\ifnum\thenumcount=1\let\showkeyslabelformat\myformat\fi\savelabel{##1}}%
										{\def\@currentlabel{\labelnummer}%
									 	\let\showkeyslabelformat\@gobble
									 	\savelabel{##1item}%
										}%
	   							}%
  }{\end{list}}%
\let\curlabelspeicher\@currentlabel%
    \let\saveitem\item%
    \def\item{\saveitem%
      \def\@currentlabel{\curlabelspeicher\kern.1em\labelnummer}}%
    \let\savelabel\label%
    \def\label##1{{\ifnum\thenumcount=1\let\showkeyslabelformat\myformat\fi\savelabel{##1}}%
										{\def\@currentlabel{\labelnummer}%
									 	\let\showkeyslabelformat\@gobble
									 	\savelabel{##1item}%
										}%
    							}%
\def\section{\@startsection{section}{1}%
  \z@{1.3\linespacing\@plus\linespacing}{.5\linespacing}%
  {\normalfont\bfseries\centering}}
\def\subsection{\@startsection{subsection}{2}%
  \z@{.8\linespacing\@plus.5\linespacing}{-1em}%
  {\normalfont\bfseries}}
\def\nlsubsection{\@startsection{subsection}{2}%
  \z@{.8\linespacing\@plus.5\linespacing}{.1ex}%
  {\normalfont\bfseries}}
\let\@afterindenttrue\@afterindentfalse%
\renewenvironment{proof}[1][\proofname]{\par \normalfont
  \topsep6\p@\@plus6\p@ \trivlist 
  \item[\hskip\labelsep\scshape
    #1\@addpunct{.}]\ignorespaces
}{%
  \qed\endtrivlist
}
\def\ps@firstpage{\ps@plain
  \def\@oddfoot{\normalfont\scriptsize \hfil\thepage\hfil
     \global\topskip\normaltopskip}%
  \let\@evenfoot\@oddfoot
  \def\@oddhead{
    \begin{minipage}{\textwidth}
      \normalfont\scriptsize
      \emph{\insertfirsthead}
    \end{minipage}}
  \let\@evenhead\@oddhead 
}
\def\insertfirsthead{}
\def\@cite#1#2{{%
 \m@th\upshape\mdseries[{#1}{\if@tempswa, #2\fi}]}}
\newcommand{\C}{\mathbb{C}}
\newcommand{\N}{\mathbb{N}}
\newcommand{\R}{\mathbb{R}}
\newcommand{\Z}{\mathbb{Z}}
\newcommand{\hairspace}{\kern .04167em}
\newcommand{\Zd}{\mathbb{Z}^d}
\DeclareMathOperator{\TextIm}{Im}
\renewcommand{\Im}{\TextIm}
\def\clap#1{\hbox to 0pt{\hss#1\hss}}
\newcommand{\beq}{\begin{equation}}
\newcommand{\eeq}{\end{equation}}
\newcommand{\be}{\begin}
\newcommand{\e}{\end}
\newcommand{\E}{\mathbb E}
\let\OldItem\item
\newcommand{\MyItem}[2][]{}%
\begin{document}

\title[Trace asymptotics for ergodic operators]{Full Szeg\H{o}-type trace asymptotics for ergodic \\ operators on large boxes}

\author[A.\ Dietlein]{Adrian Dietlein}
\address[A.\ Dietlein]{Mathematisches Institut,
  Ludwig-Maximilians-Universit\"at M\"unchen,
  Theresienstra\ss{e} 39,
  80333 M\"unchen, Germany}
\email{dietlein@math.lmu.de}

\begin{abstract}
We prove full Szeg\H{o}-type large-box trace asymptotics for selfadjoint $\Z^d$-ergodic operators $\Omega\ni \omega\mapsto H_\omega$ acting on $L^2(\R^d)$.
More precisely, let $g$ be a bounded, compactly supported and real-valued function such that the (averaged) operator kernel of $g(H_\omega)$ decays sufficiently fast, and let $h$ be a sufficiently smooth compactly supported function. We then prove a full asymptotic expansion of the averaged trace $\Tr\left( h(g(H_\omega)_{[-L,L]^d}) \right)$ in terms of the length-scale $L$.
\end{abstract}

\maketitle

\section{Introduction}

Let $0<a:\mathbb{T}\to \R$ be a continuous symbol on the torus with Fourier coefficients $(a_k)_{k\in\Z}$. Szeg\H{o} observed in 1915 \cite{Szego1915} that
\begin{align}
\label{intro:SzegoDetWeak}
\log \det\left( (a_{j-k})_{j,k=1}^{L} \right) = L (\log a)_0 + \hbox{o}(L) \qquad (L\to\infty).  
\end{align}
Under the additional assumption that the symbol is, for simplicity, $\mathcal{C}^2$, he later found \cite{SzegoStrong} the two-term asymptotic expansion
\begin{align}
\label{intro:SzegoDet}
\log \det\left( (a_{j-k})_{j,k=1}^{L} \right) = L (\log a)_0 + \sum_{l=1}^\infty l (\log a)_l (\log a)_{-l} + \hbox{o}(1) \qquad (L\to\infty).  
\end{align}
The matrix on the left-hand side is the finite-volume truncation $A_L$ of the Toeplitz matrix $A = (a_{j-k})_{j,k\in \N}$. For sufficiently smooth test functions $h$, \eqref{intro:SzegoDet} implies the asymptotic trace formula
\begin{equation}
\label{intro:SzegoTr}
\Tr\left( h(A_L) \right) = L (h\circ a)_0 + A_1 + \hbox{o}(1)\qquad (L\to\infty),
\end{equation}
where $A_1$ depends on $a$ and $h$. The subleading term in \eqref{intro:SzegoTr} crucially depends on the smoothness of the symbol \cite{FiHaConj,BASOR198625}. For symbols $a$ with jump discontinuities the subleading term is typically of order $\log (L)$,
\begin{equation}
\label{intro:logenh}
\Tr\left( h(A_L) \right) = L (h\circ a)_0 + \widetilde{A}_1 \log L + \hbox{o}(\log L)\qquad (L\to\infty),
\end{equation}
where $\widetilde{A}_1$ again depends on $h$ and $a$ but on the latter only through the one-sided limits at its jump discontinuities. For further discussion of asymptotic expansions for determinants and traces of Toeplitz matrices we refer to
 \cite{BotSilLargeTruncToepl, KrasovskyToepDet, DeiftItsKrasToepDet}. We focus on the multi-dimensional continuum version of the problem. For a symbol $a:\R^d\to \C$ and a domain $\Omega\subset\R^d$, with $\Omega_L = L\Omega$, the truncated Wiener-Hopf operator $W_L(a):= \chi_{\Omega_L}\mathcal{F}^* a  \mathcal{F} \chi_{\Omega_L}$ is the multi-dimensional continuum analog of the truncated Toeplitz matrix $A_L$ from above. Here $\mathcal{F}$ denotes the Fourier transform and $\chi_{\Omega_L}$ is the spatial projection onto $\Omega_L$. If we assume, for example, that the domain $\Omega$ is piecewise smooth and the symbol $a$ is smooth and sufficiently fast decaying at infinity, then a natural analog of the asymptotic formula \eqref{intro:SzegoTr} holds for $W_L(a)$ and sufficiently smooth test functions $h$ with $h(0)=0$. The leading term now is of order $L^d$ and the subleading term is of order $L^{d-1}$, with an error term of order $\hbox{o}(L^{d-1})$. As in the one-dimensional Toeplitz case the subleading term depends on the smoothness of the symbol $a$. Again, an additional term of order $L^{d-1}\log L$ emerges if the symbol possesses jump-type discontinuities \cite{LanWidAsympt,Widom1982,HelLeschSpitAsympt,Sobolev2010,SobBook2013}. Motivated by its connection with the bipartite entanglement entropy those results have recently been extended to non-smooth test functions $h$, see \cite{PhysRevLett.112.160403, 1751-8121-49-30-30LT04} and references therein.
If the symbol $a$ is smooth and the domain $\Omega$ is not only piecewise smooth but smooth, then one can go beyond the subleading term \cite{MR752492,WidomTraceAsympt1985}. For $M\in \N$ 
\begin{align}
\label{intro:WidFullAsymp}
\Tr\left( h(W_L(a))) \right) = \sum_{m=0}^M L^{d-m} A_m + \hbox{o}(L^{d-M}) \qquad (L\to\infty)
\end{align}
holds for recursively defined coefficients $A_m=A_m(a,h,\Omega)$.

Recently, subleading-order trace asymptotics as in \eqref{intro:SzegoTr}--\eqref{intro:WidFullAsymp} have been studied for Schr\"odinger operators with non-trivial potential \cite{PasturSlavinAreaLawScaling,PasturKirschErgodic,PasElgSchLargeBlock2017,PfirschSobolevPeriodic2017} that fit in the larger class of ergodic operators. For a, say, $\Z^d$-ergodic and selfadjoint operator $\omega\to H_\omega$ on $L^2(\R^d)$, a natural generalization for the left-hand side of \eqref{intro:WidFullAsymp} is the trace of the operator $h(g(H_\omega)_{[-L,L]^d})$ for a suitable function $g$. In \cite{PasturKirschErgodic} such trace asymptotics were studied for one-dimensional random and quasiperiodic Schr\"odinger operators on the lattice. For the random Anderson model and concrete choices of functions $g,h$ the authors showed that the leading order term, which is of order $L$, obeys a central limit theorem. Hence an additional Gaussian fluctuation of order $\sqrt{L}$ can contribute to the asymptotic expansion. Moreover, they exemplified that spectral localization can suppress the logarithmic enhancement \eqref{intro:logenh}. The latter point was generalized in \cite{PasturSlavinAreaLawScaling,PasElgSchLargeBlock2017} to the random Anderson model on the lattice in arbitrary dimension and a larger classes of functions $g,h$. On the other hand, in \cite{PfirschSobolevPeriodic2017} it was proved that the logarithmic enhancemet \eqref{intro:logenh} does occur for one-dimensional periodic continuum Schr\"odinger operators. Those findings are in line with the heuristics that for a logarithmically enhanced subleading term to pop up, a function $g$ with a discontinuity within a conducting energy region of the Hamiltonian $H$ is needed.

In this paper we establish full trace asymptotics as in \eqref{intro:WidFullAsymp} for a large class of selfadjoint $\mathbb{Z}^d$-ergodic operators on $L^2(\R^d)$. Besides some mild general requirements we only impose sufficiently fast decay of the operator kernel of $g(H)$, which can be checked directly in many situations. Typically it stems from either spectral properties of the operator $H$, such as spectral localization, or smoothness properties of the function $g$. We confine ourselves to boxes as scaling domains, $\Omega_L=\Lambda_L := [-L,L]^d$. On the one hand this is necessary because our model is only $\Z^d$-ergodic. But, on the other hand, for only piecewise smooth domains, such as domains with corners, no prior results seem to be available for asymptotic trace expansions beyond the subleading order. In this regard our result is, for example, also new for Wiener-Hopf operators in $d>1$. The restriction to the continuum case is for convenience and analogous results hold for ergodic operators on the lattice $\Z^d$. Let us state an informal version of our result. Precise definitions and statements can be found in Section \ref{sec:ModelResult}. Let $\omega\to H_\omega$ be a $\Z^d$-ergodic selfadjoint operator and let $g:\R\to\R$ be bounded and such that the operator kernel of $g(H)$ decays sufficiently fast. Then, for sufficiently smooth functions $h:\R\to\C$ such that $h(0)=0$,
\begin{equation}
\label{intro:MainResult}
\mathbb{E}\big[ \Tr\big( h(g(H_\omega)_{\Lambda_L}) \big) \big] = \sum_{m=0}^{d} A_{m} (2L)^{d-m} + \mathcal{O}(L^{-\tau}) \qquad (L\to\infty).
\end{equation}
Here $\mathbb{E}$ denotes averaging with respect to $\omega$. Moreover, $\tau>0$ depends on the rate of decay of the operator kernel of $g(H)$ and the regularity of $h$ and in contrast to the expansion \eqref{intro:WidFullAsymp} for smooth domains, the expansion terminates at constant order. The coefficients $A_m$ can be represented as $\omega$-averaged traces of differences of operators of the form $h(g(H_\omega)_G)$, $G\subseteq\R^d$. For more explicit formulas of the coefficients one would have to specialize to concrete models. This can already be seen at the leading-order coefficient $A_0$ which can be interpreted as a density of states term.\\ 
The main idea which leads to \eqref{intro:MainResult} is a scheme of iterated regularizations that allows us to elaborate the contribution of a face of the cube to the different asymptotic orders. Apart from our concrete application this procedure could also be useful in proving asymptotic expansions for more general domains with corners for $\R^d$-ergodic operators. Moreover, parts of the proof could be useful for a more model-dependent analysis. For instance, formula \eqref{pf:SzegoP1P2,15} might serve as a starting point towards a higher-dimensional generalization of the results from \cite{PasturKirschErgodic} on Gaussian fluctuations described above.

The structure of the paper is as follows. In the next section we define our working model, a class of $\Z^d$-ergodic operator on $\R^d$, and present our main result, Theorem \ref{th1:SzegoAsymptAvg}. The theorem is then split into two parts, Theorem \ref{lem:AuxRes1} and Theorem \ref{th:SzegoAsymptAvg}, which are proven in Section \ref{sec:AuxProofs} and Section \ref{sec:ProofTheorem}. At the beginning of those two sections a short outline of the respective proofs is included.

\section{Model and Result}
\label{sec:ModelResult}

\subsection{Model}
\label{sec:TheModel}
Our working model is a $\Z^d$-ergodic operator acting on $L^2(\R^d)$. For a probability space $(\Omega,\mathbb{P})$ this is a measurable map
\begin{equation}
\label{def:TheModel}
\Omega\ni\omega \mapsto H:= H_{\omega} \in \mathcal{L}_{\text{sa}}(L^2(\R^d))
\end{equation}
into the selfadjoint operators on $L^2(\R^d)$ which is $\Z^d$-translation invariant in the following sense. There exists a family $\{T_j\}_{j\in\Z^d}$ of measure preserving (m.p.) transformations $T_j:\Omega\to\Omega$ such that for the unitary family $\{U_j\}_{j\in\Z^d}$ of translation operators on $L^2(\R^d)$, acting as $(U_j\psi)(x)=\psi(x-j)$, we have
\begin{equation}
\label{def:TheModel2}
U_j H_{\omega} U_j^* = H_{T_j \omega}=: H^{T_j}_{\omega}.
\end{equation}  
For details, such as the notion of measurable operators and an interpretation of \eqref{def:TheModel2} in case of domain issues, we refer to \cite{carlac1990random,pastfig1992random}.
As indicated in \eqref{def:TheModel}, the $\omega$-dependence of the operator and related quantities is mostly suppressed in notation. In this vein, \eqref{def:TheModel2} reads $U_jHU_j^* = H^{T_j}$. We impose the following further requirement on the model. For $a\in\R^d$ the operator $\chi_a$ denotes the spatial projection onto the set $Q_a := a+[-2^{-1},2^{-1}]^d$. For $p>0$ we denote the Schatten-$p$ (quasi-)norm of an operator $A$ by $\|A\|_p$.
\be{MyDescription}
\item[$(\mathcal{A}_{1})$]{For any $p>0$ and every (measurable) bounded and compactly supported function $g:\R\to\R$
\begin{equation}
\label{eq:AssagStat}
 C_{p,g} := \sup_{a,b\in\R^d} \esssup_{\omega\in\Omega} \|\chi_a g(H_{\omega})\chi_b \|_p  < \infty.
\end{equation}
\label{AssAg}}
\e{MyDescription}
\begin{remarks}
\item The bound \eqref{eq:AssagStat} for instance holds for Schr\"odinger operators $H=-\Delta+V$ with, for simplicity, bounded potential $V$ \cite[App. A]{artRSO2006AizEtAl2}. A more detailed proof is contained in \cite{DiGeMu16b}.
\item For fixed pairs of functions $g,h$ in Theorems \ref{lem:AuxRes1} and \ref{th:SzegoAsymptAvgstat3} it is sufficient to assume \eqref{eq:AssagStat} for the function $g$ and a sufficiently small $h$-dependent value of $0<p<1$.
\end{remarks}
The $\Z^d$-ergodicity and \ref{AssAg} are the core assumptions. For such operators we prove our main result under the assumption that the operator kernel of $g(H)$ has sufficient spatial decay. A precise notion of this is given in the next section. Besides those essential requirements we facilitate life by introducing additional symmetry. 

\be{MyDescription}
\item[$(\mathcal{A}_2)$]{Symmetry of spatial directions: For $\pi\in \mathcal{S}^d$, the group of permutations on $\{1,...d\}$, we define the unitary operator $U_{\pi}$ on $L^2(\R^d)$ acting as $(U_{\pi}\psi)(x):= \psi(x_{\pi})$, where $x_{\pi}:= (x_{\pi(1)},...,x_{\pi(d)})$. Then for any $\pi\in \mathcal{S}^d$ there exists a m.p. transformation $P_{\pi}$ such that
\begin{equation}
U_{\pi}HU_{\pi}^* = H^{P_{\pi}}.
\end{equation}
 \label{assV2}}
\item[$(\mathcal{A}_3)$]{Reflection symmetry: 
For $\sigma=(\sigma_i)_{i=1}^d \in \{0,1\}^d=:\mathcal{R}^d$ we define the unitary operator $U_{\sigma}$ on $L^2(\R^d)$ acting as $(U_{\sigma}\psi)(x):= \psi(x_{\sigma})$, where $x_{\sigma}:=((-1)^{\sigma_1}x_1,...,(-1)^{\sigma_d}x_d)$. Then for any $\sigma\in \mathcal{R}^d$ there exists a m.p. transformation $R_{\sigma}$ such that 
\begin{equation}
\label{eq:assV2}
U_{\sigma}HU_{\sigma} = H^{R_{\sigma}}.
\end{equation} 
\label{assV3}}
\e{MyDescription}
In \eqref{eq:assV2} we used that $U_\sigma= U_\sigma^*$ for $\sigma\in \mathcal{R}^d$.
Those two additional assumptions are made for convenience and could be dropped. We included them because they make statement and proof of our results less cumbersome; for instance \ref{assV2} allows to reduce up to $|S_d|=d!$ terms to only one. Our guiding example of metrically transitive operators are Schr\"odinger operators $H=-\Delta+V$, where $-\Delta=-\sum_{j=1}^d \partial^2_{x_j}$ is the Laplacian and $V=V_\omega(x)$ is an $\omega$-dependent and real-valued potential that satisfies $U_jV_\omega(\cdot)U_j^* = V_{T_j\omega}(\cdot)$. Concrete examples are periodic Schr\"odinger operators and random alloy-type Schr\"odinger operators with periodically arranged single-site potentials.

Let us fix some notation before starting off. The integral with respect to $\mathbb P$ is denoted by $\E$ and referred to as expectation. On $\R^d$ we always consider the supremum norm $|x|:=\sup_{j=1}^d |x_j|$. The distance function on $\R^d$ with respect to supremum norm, either of a set and a point or of two sets, is denoted by $\distt(\cdot, \cdot)$. The indicator function of a set $G$ is denoted by $\id_G$ and the corresponding orthogonal projection acting on $L^2(\R^d)$ is denoted by $\chi_G$.  For a bounded function $g$ (here, and in the following, all functions are assumed to be measurable) we write $g(H)_G$ and $g(H)_L$ for the restriction of $g(H)$ onto $G$ and $\Lambda_L := [-L,L]^d$, respectively. If convenient, we view the bounded operators $g(H)_G$ on $G\subseteq\R^d$ as operators on $\R^d$ via the natural embedding. With some abuse of notation, this is $g(H)_G = \chi_G g(H) \chi_G$. Moreover, $Q := [-2^{-1},2^{-1}]^d$ and $Q_a:= a+Q$ are the cubes of side-length $1$ around $0$ respectively $a\in\R^d$ and $\chi_a:= \chi_{Q_a}$ denotes the corresponding projection operator. For a set such as $\{x\in\R^d: x_1,...,x_k\in [0,1]\}$, where $1\leq k \leq d$, we occasionally write $\id_{\{x_1,...,x_k\in [0,1]\}}$ and $\chi_{\{x_1,...,x_k\in [0,1]\}}$ for the corresponding indicator function and projection operator.

\subsection{Main result}
\label{sec:MainResults}

Besides the technical properties \ref{AssAg}--\ref{assV3} introduced above, our main assumption is sufficiently fast decay of the operator kernel of $g(H)$, where $g:\R\to\R$ is a compactly supported and bounded function. The two guiding examples for which decay of the operator kernel is known are spectral localization of the operator $H$ and sufficiently regular functions $g$. To cover those two guiding examples we assume that one of the following two conditions holds.
\be{MyDescription}
\item[$(\mathcal{L}_{1,q})$]{For fixed $q>0$ there exists a constant $C_{1,q}$ such that for all $a,b\in \R^d$ 
\begin{equation}
\label{eq:AssLoc1}
\esssup_{\omega\in\Omega} \|\chi_a g(H_{\omega}) \chi_b\| \leq \frac{C_{1,q}}{\left(1+|a-b|\right)^q}.
\end{equation} \label{AssLoc1}}
\item[$(\mathcal{L}_2)$]{There exist constants $C_2,\mu>0$ such that for all $a,b\in \R^d$
\begin{equation}
\label{eq:AssLoc2}
 \mathbb{E} \left[\|\chi_a g(H) \chi_b\| \right] \leq C_2 e^{-\mu |a-b|}.
\end{equation} 
 \label{AssLoc2}}
\e{MyDescription}
The first condition holds for a large class of operators which obey a Combes-Thomas estimate and with a power of $q$ that depends on the regularity of $g$. More concretely, if $H=-\Delta+V$ is a Schr\"odinger operator with, for simplicity, uniformly (in $x\in\R^d$ and in $\omega\in\Omega$) bounded potential $V$, then $g\in \mathcal{C}_c^{q+2}(\R)$ implies that \ref{AssLoc1} holds \cite{MR1937430}. The second bound for instance holds in case $H$ is an alloy-type random Schr\"odinger operator and $g$ is a bounded function such that $\supp(g)$ is a subset of the region of spectral localization characterized via fractional moment bounds \cite{artRSO2006AizEtAl2}. 

To state the main result, and to define the asymptotic coefficients from \eqref{intro:MainResult}, we introduce some more notation. For $n=0,...,d$ we define the model operators
\begin{equation}
\label{th:SzegoAsymptAvgstat3}
f_{n} := h(g(H)_{\mathbb{R}_{\geq 0}^n\times \R^{d-n}}),
\end{equation}
which approximate $h(g(H)_{\Lambda_L})$ in respective areas of the cube $\Lambda_L$. For instance, $f_0$ is an approximation of the operator in the bulk of $\Lambda_L$ and $f_1$ is an approximation of the operator along a face of $\Lambda_L$ (taking the symmetries \ref{assV2} and \ref{assV3} into account). 
Moreover, for $1\leq n \leq m\leq d$ we set
\begin{align}
c_{m,n} &:= \frac{(-1)^{m-n}2^{m}d!}{(m-n)!(d-m)!},\\
\widehat{\chi}_{m,n} &:= \chi_{\R_{\geq 0}^d}\chi_{\{x_1\leq...\leq x_{n}\}} \chi_{\{x_n\geq x_{n+1},...,x_{m}\}}\chi_{\{x_{m+1},...,x_{d}\in [0,1]\}} \label{def:const1}.
\end{align}
The constant $c_{m,n}$ is a combinatorial factor which stems from collecting terms via the symmetry assumptions \ref{assV2} and \ref{assV3}. The projection operator $\widehat{\chi}_{m,n}$ ensures that the first $n$ coordinates are ordered increasingly and, in addition, that the $n$-th coordinate is larger than the first $m$ coordinates. If $n=m$, we interpret $\chi_{\{x_n\geq x_{n+1},...,x_{m}\}} = \idd_{L^2(\R^d)}$ in \eqref{def:const1}, and, in the same vein, $\chi_{\{x_{d+1},...,x_{d}\in [0,1]\}} = \idd_{L^2(\R^d)}$ if $m=d$. Finally, for a fixed bounded function $g:\R\to\R$ we set
\begin{equation}
\label{def:whsigma}
\widehat{\Sigma}_g := [\inf \Sigma_{g(H)},\sup \Sigma_{g(H)}],
\end{equation}
where $\Sigma_{g(H)}$ is the (almost surely non-random) spectrum of $g(H)$. 

\begin{theorem}
\label{th1:SzegoAsymptAvg}
Let $g:\R\to\R$ and $h:\R\to\C$ be two compactly supported and bounded functions with $h(0)=0$. If one of the following two conditions is satisfied for $\widetilde{q}>2d$
\begin{enumerate}
\item[(i)]  \ref{AssLoc2} holds and $h\in \mathcal{C}^{\lfloor 2\widetilde{q}+2\rfloor}(\R)$,
\item[(ii)] \ref{AssLoc1} holds for $q> 2d+\widetilde{q}$ and $h$ can be continued analytically to $\{z\in \C:\, \dist(z,\widehat{\Sigma}_g) < C_{g,\widetilde{q}}\}$ for the constant $C_{g,\widetilde{q}}$ specified in \eqref{eq:HoloConst} below,
\end{enumerate}
then, as $\N\ni L\to\infty$, the asymptotic expansion
\begin{equation}
\label{cor:SzegoAsymptAvgStat}
\mathbb{E}\big[ \Tr\big( h(g(H)_{\Lambda_L}) \big) \big] = \sum_{m=0}^{d} A_{m} (2L)^{d-m} + \mathcal{O}(L^{2d-\widetilde{q}})
\end{equation} 
holds. The coefficients $A_m$ are defined as
\begin{align}
\label{cor:SzegoAsymptAvgstat2}
A_{m} &:= \sum_{n=1}^m c_{m,n}  \Tr\left(\mathbb{E}\left[\widehat{\chi}_{m,n} \{ f_{n}-f_{n-1} \} \widehat{\chi}_{m,n} \right] \right).
\end{align}
\end{theorem}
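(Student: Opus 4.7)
The strategy is to localize the trace to unit cubes $Q_a$ and to compare the localized operator $\chi_a h(g(H)_{\Lambda_L})\chi_a$ with a hierarchy of model operators $f_0,f_1,\dots,f_d$ that approximate $h(g(H)_{\Lambda_L})$ near $0$-, $1$-, \dots, $d$-codimensional strata of the boundary $\partial\Lambda_L$. The symmetry assumptions \ref{assV2}--\ref{assV3} reduce the analysis to a single fundamental octant; within that octant the ordering enforced by $\widehat{\chi}_{m,n}$ picks a canonical representative of the $2^{d}d!$ symmetry-equivalent configurations near an $m$-codimensional face, and the combinatorial prefactor $c_{m,n}$ simply collects them.

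The first technical step is to regularize $h$. Under hypothesis (ii) I would write
\begin{equation*}
h(g(H)_G) \;=\; \frac{1}{2\pi\i}\oint_{\Gamma} h(z)\bigl(z - g(H)_G\bigr)^{-1}dz
\end{equation*}
for a contour $\Gamma$ enclosing $\widehat{\Sigma}_g$ and contained in the analyticity region of $h$, whose distance to $\widehat{\Sigma}_g$ fixes the constant $C_{g,\widetilde{q}}$. Iterating the geometric resolvent identity then transfers the polynomial decay granted by \ref{AssLoc1} to the kernel of $h(g(H)_G)$ at the prescribed rate $\widetilde{q}$. Under hypothesis (i), the Helffer--Sj\"ostrand representation for $h\in\mathcal{C}^{\lfloor 2\widetilde{q}+2\rfloor}$, combined with \ref{AssLoc2}, plays the same role. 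In either case, the model operators $f_n$ and their differences $f_n - f_{n-1}$ inherit kernel decay of rate $\widetilde{q}$.

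The core step is the iterated regularization. Decomposing
\begin{equation*}
\Tr\bigl(h(g(H)_{\Lambda_L})\bigr) \;=\; \sum_{a\in\Lambda_L\cap\Z^d}\Tr\bigl(\chi_a h(g(H)_{\Lambda_L})\chi_a\bigr),
\end{equation*}
I would, for each $a$ in the fundamental octant, identify the number $n=n(a)$ of coordinates whose distance to $\partial\Lambda_L$ lies below a prescribed threshold, and replace the original operator by $f_{n}$ after the appropriate translation. Writing the telescoping
\begin{equation*}
h(g(H)_{\Lambda_L}) \;\sim\; f_0 + \sum_{n=1}^{d}(f_n - f_{n-1})
\end{equation*}
in this localized sense and summing over $a$, the expected trace of each difference $f_n - f_{n-1}$, weighted by the number of $a$'s in the corresponding stratum and localized by $\widehat{\chi}_{m,n}$, contributes exactly $(2L)^{d-m} A_m$ after invoking ergodicity. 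The projector $\widehat{\chi}_{m,n}$ precisely selects, among the $m$ ``close'' coordinates, the one that is farthest from the boundary as the natural scale of the stratum, while $c_{m,n}$ accounts for the orbit of this representative under symmetry.

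The main obstacle will be controlling the remainder uniformly in $L$. Each telescoping step produces an error supported near a straightened face whose \emph{trace-class} norm on $Q_a$ must decay fast enough in the distance from that face to sum to a negligible quantity over $a\in\Lambda_L$. A bare operator-norm bound is insufficient; one must combine it with the uniform Schatten-$p$ bound from \ref{AssAg} via interpolation to promote the rate-$\widetilde{q}$ operator-norm decay into a trace-norm decay. The combinatorial bookkeeping---checking that an inclusion-exclusion over the subsets of active coordinates produces precisely the projectors $\widehat{\chi}_{m,n}$ rather than overlapping regions---parallels the definition of $c_{m,n}$. The final error of order $\mathcal{O}(L^{2d-\widetilde{q}})$ reflects that, after summing geometric configurations both outside and inside the stratum, one loses an additional $L^d$ factor on top of the $L^d$ sum over $a$, which is precisely why $\widetilde{q}>2d$ is required.
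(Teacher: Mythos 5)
Your overall architecture matches the paper's: regularize $h$ by holomorphic functional calculus under (ii) and by Helffer--Sj\"ostrand under (i), obtain rate-$\widetilde{q}$ decay for the kernels of the differences of localized operators, upgrade operator-norm decay to trace-norm decay by interpolating with the Schatten-$p$ bounds from \ref{AssAg}, and then telescope through the model operators $f_0,\dots,f_d$ with symmetry and inclusion-exclusion producing $\widehat{\chi}_{m,n}$ and $c_{m,n}$, with the budget $L^{d-m}\cdot L^{2m-\widetilde{q}}$ explaining the error $\mathcal{O}(L^{2d-\widetilde{q}})$ and the condition $\widetilde{q}>2d$. However, there is a genuine gap in case (i). You assert that Helffer--Sj\"ostrand ``combined with \ref{AssLoc2} plays the same role,'' but \ref{AssLoc2} only gives decay of the \emph{averaged} kernel of $g(H)$ itself, whereas the Helffer--Sj\"ostrand formula requires decay of the kernel of the resolvent $R_{x+iy}(g(H)_G)$ near the real axis. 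The standard Combes--Thomas argument (conjugation by exponential weights, or the polynomial variant you invoke under (ii)) is a pathwise argument: it needs an almost-sure/essential-sup bound on the kernel, and the expectation cannot be pushed through the inverse $(g(H)_G-z)^{-1}$. Bridging this is precisely the paper's main analytic novelty: an averaged Combes--Thomas estimate proved by expanding the resolvent in a Neumann series at energies far from the spectrum (where H\"older's inequality can be applied to products of averaged kernels) and then running Stein interpolation in the complex energy parameter, yielding bounds of the form $\eta^{-1}e^{-\mu\eta\,\distt(\cdot,\partial_{G'}G)^{\theta}}$ with a fractional exponent $\theta<1/2$, which is what gets integrated against the Helffer--Sj\"ostrand weight $|\omega_h(x,y)|\leq C|y|^{n-1}$. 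Without supplying such an estimate (or an alternative), your case (i) does not go through.

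A secondary, more minor divergence: your stratification of the cube by ``the number of coordinates within a prescribed threshold of $\partial\Lambda_L$'' introduces an extra length scale that the paper avoids. The paper partitions the corner $[0,L]^d$ exactly into the simplices $\{x_{\pi(1)}\leq\cdots\leq x_{\pi(d)}\}$ and telescopes $f_d-f_0=\sum_{n}(f_n-f_{n-1})$ identically, so no cross-terms between strata and no threshold-dependence arise; the inclusion-exclusion then produces exactly $\widehat{\chi}_{m,n}$ and $c_{m,n}$. With a thresholded decomposition you would have to send the threshold to infinity with $L$, control the interface errors it creates, and verify that the limiting coefficients still coincide with \eqref{cor:SzegoAsymptAvgstat2}; this is extra work your sketch does not address, though it is plausibly repairable, unlike the missing averaged Combes--Thomas input.
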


\begin{remark}
\label{rem:SzAsym1}
The representation \eqref{cor:SzegoAsymptAvgstat2} of the coefficients is not unique and depends on the partition of corners for the cube $[-L,L]^d$ which we choose in the proof. At the end of Section \ref{sec:ProofTheorem} we show that the coefficients also have a partition-free representation:
\begin{align}
\label{cor:rmk1}
A_{m} &= \lim_{L\rightarrow\infty} \sum_{n=0}^{m} \widetilde{c}_{m,n} \mathbb{E}\left[ \Tr\left( f_{n} \chi_{[0,L]^d}\chi_{\{x_{m+1},..,x_{d}\in[0,1]\}} \right) \right]
\end{align}
for constants $\widetilde{c}_{m,n}$ defined in \eqref{eq:Part3E4}. The operator $f_n \chi_{[0,\infty)^d} \chi_{\{x_{m+1},...,x_{d}\in [0,1]\}}$ is trace class only if $m=0$, which corresponds to the coefficient $A_0$. The $L$-limit can therefore not be interchanged with the sum appearing in \eqref{cor:rmk1}.
\end{remark}

\begin{remarks}
\item The validity of the asymptotic expansion \eqref{cor:SzegoAsymptAvgStat} is not restricted to assumptions $(i)$ or $(ii)$, which rather serve as two relevant examples. See Remark \ref{rem:ValidityTraceEst} below.
\item The uncommon ordering of expectation and trace norm in \eqref{cor:SzegoAsymptAvgstat2} stems from Lemma \ref{lem:CTRan2} and is only necessary under assumption $(i)$.
\item \label{rem:SzAsym3} Under reasonable assumptions, the theorem can be extended to non-integer length-scales $L\in \R_{>0}$. For $\Z^d$-ergodic operators $H$ as considered here the coefficients $A_m$ then become functions of the fractional part of $L$. This dependence in turn does not show up if the operator is invariant under $\R^d$-translations. 
\end{remarks}
The proof of Theorem \ref{th1:SzegoAsymptAvg} is presented in two parts, Theorem \ref{lem:AuxRes1} and Theorem \ref{th:SzegoAsymptAvg} below. The aim of this subdivision is to split the result into an analytic part, Theorem \ref{lem:AuxRes1}, and an algebraic part, Theorem \ref{th:SzegoAsymptAvg}. 
For sets $G\subset G'\subseteq \R^d$ we define the boundary of $G$ in $G'$ as $\partial_{G'}G:= \partial G \cap \intt(G')$, where $\intt(\cdot)$ denotes the topological interior.

\begin{theorem}
\label{lem:AuxRes1}
Let $g:\R\to\R$ and $h:\R\to\C$ be two compactly supported, bounded functions with $h(0)=0$. If, additionally, one of the following two conditions is satisfied for fixed $\widetilde{q}>0$ 
\begin{enumerate}
\item[(i)]  \ref{AssLoc2} holds and $h\in \mathcal{C}^{\lfloor 2\widetilde{q}+2\rfloor}(\R)$,
\item[(ii)] \ref{AssLoc1} holds for $q> 2d+\widetilde{q}$ and $h$ can be continued analytically to $\{z\in \C:\, \dist(z,\widehat{\Sigma}_g) < C_{g,\widetilde{q}}\}$ for the constant $C_{g,\widetilde{q}}$ specified in \eqref{eq:HoloConst} below,
\end{enumerate}
then the following holds: 
There exists a constant $C_{g,h,\widetilde{q}}$ such that for all $G\subset G'\subseteq\R^d$ and all $a,b\in G'$ with $Q_a\subset G$ or $Q_b\subset G$ 
\begin{equation}
	\label{lem:AuxRes1Stat}
\left\Vert \mathbb{E} \left[\chi_a \{ h(g(H)_G) - h(g(H)_{G'})\} \chi_b  \right]\right\Vert_1  \leq  \frac{C_{g,h,\widetilde{q}}}{\distt(a,\partial_{G'}G)^{\widetilde{q}}+\distt(b,\partial_{G'}G)^{\widetilde{q}}}.
\end{equation}
\end{theorem}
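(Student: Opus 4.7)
The plan is to represent $h(g(H)_G)-h(g(H)_{G'})$ via functional calculus, pass the difference through a resolvent identity in order to isolate a factor spatially localized on the boundary layer $G'\setminus G$, and then exploit the spatial decay provided by \ref{AssLoc1} or \ref{AssLoc2} together with the uniform local Schatten-norm bound \ref{AssAg}. Setting $A:=g(H)_G$ and $B:=g(H)_{G'}$, viewed as selfadjoint operators on $L^2(\R^d)$ (extended by zero off $G$ and $G'$ respectively), the basic resolvent identity reads
$$
R_A(z)-R_B(z) = R_A(z)\,(B-A)\,R_B(z), \qquad B-A = \chi_{G'\setminus G}\,g(H)\,\chi_{G'} + \chi_G\,g(H)\,\chi_{G'\setminus G},
$$
with $R_A(z):=(A-z)^{-1}$. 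Sandwiching by $\chi_a$ and $\chi_b$ with $Q_a\subset G$ (the case $Q_b\subset G$ is analogous) forces the $\chi_{G'\setminus G}$-factor to lie at distance at least $\distt(a,\partial_{G'}G)$ from $\chi_a$, which is the source of the $\widetilde q$-decay in \eqref{lem:AuxRes1Stat}.

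To obtain the corresponding bound for $h(A)-h(B)$ one integrates the resolvent difference against an appropriate weight. Under (ii), $h$ is analytic in an open neighborhood $U$ of $\widehat\Sigma_g\supseteq\spectrum(A)\cup\spectrum(B)$, and one picks a contour $\Gamma\subset U$ enclosing $\widehat\Sigma_g$ at a fixed positive distance to apply the Dunford--Riesz representation
$$
h(A)-h(B) = \frac{1}{2\pi i}\oint_\Gamma h(z)\bigl[R_A(z)-R_B(z)\bigr]\,dz.
$$
Under (i), one uses the Helffer--Sj\"ostrand representation
$$
h(A)-h(B) = \frac{1}{\pi}\int_\C \bar\partial\tilde h(z)\bigl[R_A(z)-R_B(z)\bigr]\,dz\,d\bar z,
$$
with an almost-analytic extension $\tilde h$ of order $N=\lfloor 2\widetilde q+2\rfloor$ satisfying $|\bar\partial\tilde h(z)|\lesssim|\Im z|^{N-1}$ on $\supp\tilde h$. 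In both cases the spatial decay of $\chi_a g(H)\chi_c$ is transferred to that of $\chi_a R_A(z)\chi_c$ via a Neumann-type geometric expansion, distributed over unit cubes $Q_c$ with $c\in\Z^d$. The local Schatten bound \ref{AssAg} then upgrades operator-norm estimates to trace-norm estimates via H\"older's inequality for Schatten classes, and the expectation is handled by Fubini.

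The main obstacle lies in case (i), where the interplay between the spectral singularities of the resolvents and the spatial decay is delicate: in $R_A(z)(B-A)R_B(z)$ both resolvents have norm of order $|\Im z|^{-1}$, and transferring the spatial decay of $g(H)$ onto $R_A(z)$ via geometric expansion costs an additional $|\Im z|^{-1}$-factor at every step. The bookkeeping dictates the required smoothness: the weight $|\bar\partial\tilde h(z)|\lesssim|\Im z|^{N-1}$ with $N=\lfloor 2\widetilde q+2\rfloor$ must absorb all accumulated singularities while still leaving room for $\widetilde q$ powers of spatial decay and enough headroom for the final integration in $z$. In case (ii) the parallel difficulty is the loss of $d$ powers of the rate $q$ at every $\ell^1(\Z^d)$-summation over unit cubes in the boundary layer and in the resolvent expansion, which explains the hypothesis $q>2d+\widetilde q$.
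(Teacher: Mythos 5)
Your treatment of case (ii) follows the paper's route (contour representation, geometric resolvent identity localizing the difference on $G'\setminus G$, spatial decay of the resolvent kernel, then interpolation of the operator-norm bound with a Schatten-$p$ bound, $p<1$, coming from \ref{AssAg} via Lemma~\ref{lem:AppTrClEst}), and up to the glossed details of the polynomial Combes--Thomas estimate this part is fine. The genuine gap is in case (i). Under \ref{AssLoc2} only the \emph{averaged} kernel $\mathbb{E}\left[\|\chi_a g(H)\chi_b\|\right]$ decays; there is no $\omega$-wise decay to feed into your ``Neumann-type geometric expansion'' of $\chi_a R_A(z)\chi_c$. Near the real axis, where the Helffer--Sj\"ostrand weight forces you to harvest the decay, a Neumann expansion of the resolvent does not converge, and an iterated finite resolvent/geometric expansion produces products of correlated random factors (resolvents with no decay at all, only norm $|\Im z|^{-1}$, interlaced with kernels of $g(H)$) whose expectation cannot be factorized; ``Fubini'' only exchanges $\mathbb{E}$ with the $z$-integral and the cube sums, it does not decouple these factors, and H\"older necessarily degrades the exponential rate with the number of factors. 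This is precisely the obstruction the paper flags, and its resolution is the actual new ingredient of the proof: an \emph{averaged} Combes--Thomas estimate (Lemmas~\ref{lem:CTRan} and \ref{lem:CTRan2}) obtained by expanding the resolvent only at large imaginary part $\Im z = M > 2(\|A\|_\infty+1)$, distributing the expectation over the product by H\"older (which weakens the decay to a fractional-exponential $e^{-\mu|a-b|^{1/4}}$ after optimizing the truncation length of the series), and then transferring a $\distt(z,\widehat{\Sigma})$-proportional fraction of this decay down to energies near the spectrum by Stein interpolation in the complex energy parameter. No interpolation in the spectral parameter appears in your plan, and without it (or a substitute) the case (i) estimate does not go through.

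A secondary consequence you would miss is quantitative: your bookkeeping (``one factor $|\Im z|^{-1}$ per step of decay'') is not the mechanism that fixes the smoothness requirement. In the paper the averaged Combes--Thomas bound has the form $\distt(z,\widehat\Sigma)^{-1}\exp\bigl(-\mu\,\distt(z,\widehat\Sigma)\,\distt(a,\partial_{G'}G)^{\theta}\bigr)$ with $\theta<1/2$, and inserting this into the Helffer--Sj\"ostrand integral with $|\omega_{h,n}(x,y)|\leq C|y|^{n-1}$ yields decay of order $(n-1)\theta$; the condition $(n-1)\theta>\widetilde q$ with $\theta<1/2$ is exactly why $n=\lfloor 2\widetilde q+2\rfloor$ derivatives are needed. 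Also note that under (i) all norm estimates are for the averaged operator $\mathbb{E}[\chi_a\{\cdot\}\chi_b]$ (the expectation must be taken before the norms), which is why the statement and the subsequent Schatten interpolation are phrased for the averaged kernel rather than pathwise.
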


\begin{theorem}
\label{th:SzegoAsymptAvg}
Let $g:\R\to\R$ and $h:\R\to\C$ be two compactly supported and bounded functions such that there exist constants $C_h,\gamma_h>0$ such that $|h|\leq C_h| \cdot |^{\gamma_h}$. If \eqref{lem:AuxRes1Stat} holds for $\widetilde{q}>2d$, then, as $\N\ni L\to\infty$
\begin{equation}
\label{cor2:SzegoAsymptAvgStat}
\mathbb{E}\big[ \Tr\big( h(g(H)_{\Lambda_L}) \big) \big] = \sum_{m=0}^{d} A_{m} (2L)^{d-m} + \mathcal{O}(L^{2d-\widetilde{q}}).
\end{equation} 
\end{theorem}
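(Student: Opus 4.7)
The plan is to discretize the trace over the unit-cube tiling of $\Lambda_L$, symmetrize using \ref{assV2} and \ref{assV3}, run an iterated regularization that replaces $h(g(H)_{\Lambda_L})$ locally by the half-space models $f_n$ via Theorem~\ref{lem:AuxRes1}, and then reorganize the result by the codimension of the nearest face of $\Lambda_L$. The growth condition $|h(x)| \leq C_h |x|^{\gamma_h}$ together with \ref{AssAg} applied at $p < \min(\gamma_h,1)$ gives that $h(g(H)_{\Lambda_L})$ is trace class, so one may begin from
\[
\mathbb{E}\big[\Tr\big(h(g(H)_{\Lambda_L})\big)\big] = \sum_{a \in \mathcal{T}_L} \mathbb{E}\big[\Tr\big(\chi_a\, h(g(H)_{\Lambda_L})\, \chi_a\big)\big],
\]
where $\mathcal{T}_L$ denotes the centers of the unit cubes tiling $\Lambda_L$. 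Reflection symmetry \ref{assV3} collapses this sum onto the positive octant with multiplicity $2^d$, and permutation symmetry \ref{assV2} further reduces it to the wedge $\mathcal{W}_L = \{a \in \mathcal{T}_L : 0 \leq a_1 \leq \cdots \leq a_d\}$ with combinatorial weight $d!/|\mathrm{Stab}(a)|$. For $a \in \mathcal{W}_L$ the only walls of $\Lambda_L$ visible in the positive octant are $\{x_i = L\}$ for $i=1,\ldots,d$, with sorted distances $L-a_d \leq L-a_{d-1} \leq \cdots \leq L-a_1$.

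Next, for each $a \in \mathcal{W}_L$ I would perform an iterated regularization that opens the walls of $\Lambda_L$ in order of decreasing distance from $a$. Letting $\Lambda_L = G_0(a) \subset G_1(a) \subset \cdots \subset G_{2d}(a) = \R^d$ be the nested chain in which $G_k(a)$ is obtained from $G_{k-1}(a)$ by removing the $k$-th farthest wall $W_k$, one has for every stopping index $n \in \{0,\ldots,d\}$ the telescoping identity
\[
h(g(H)_{\Lambda_L}) = h(g(H)_{G_{2d-n}(a)}) + \sum_{k=2d-n+1}^{2d} \big[h(g(H)_{G_{k-1}(a)}) - h(g(H)_{G_k(a)})\big].
\]
Sandwiching with $\chi_a$ and taking expectations, the $k$-th remainder has trace-norm bounded by $O(\dist(a,W_k)^{-\widetilde{q}})$ via \eqref{lem:AuxRes1Stat}. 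By $\Z^d$-ergodicity combined with \ref{assV2} and \ref{assV3}, after translating the surviving "corner" of $G_{2d-n}(a)$ to the origin and reflecting the kept walls to align with $\{x_i=0\}$, the principal term $h(g(H)_{G_{2d-n}(a)})$ is distributionally identical to $f_n$ from \eqref{th:SzegoAsymptAvgstat3}; thus $\mathbb{E}[\Tr(\chi_a h(g(H)_{G_{2d-n}(a)}) \chi_a)] = \mathbb{E}[\Tr(\chi_{\tau(a)} f_n \chi_{\tau(a)})]$ for an appropriate translation $\tau(a)$.

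The final step is to regroup these local contributions by codimension. I would perform a second telescoping in $n$ by writing $f_n = f_0 + \sum_{n'=1}^n (f_{n'} - f_{n'-1})$ and swapping the order of summation. Each increment $f_{n'}-f_{n'-1}$ is then multiplied by a sum of unit-cube expectations over those $a \in \mathcal{W}_L$ that lie in the codim-$m$ stratum, i.e.\ with precisely $m$ coordinates within $O(1)$ of $L$. The $(d-m)$ tangential directions parallel to the stratum are summed separately: by $\Z^d$-ergodicity the lattice sum of $\mathbb{E}[\Tr(\chi_{\tau(a)}(f_{n'}-f_{n'-1})\chi_{\tau(a)})]$ over those directions is asymptotic to $(2L)^{d-m}$ times the block expectation carrying the projection $\chi_{\{x_{m+1},\ldots,x_d\in[0,1]\}}$. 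The remaining factors $\chi_{\{x_1 \leq \cdots \leq x_{n'}\}}$ and $\chi_{\{x_{n'} \geq x_{n'+1},\ldots,x_m\}}$ appearing in $\widehat{\chi}_{m,n'}$ emerge from the wedge reduction and from the specification that the $n'$-th ordered coordinate is the smallest among the $m$ coordinates selected as "close to a wall"; the inclusion-exclusion signs from the two nested telescopings, multiplied by the $|\mathrm{Stab}|$-orbit sizes under $\mathcal{S}^d \times \mathcal{R}^d$, produce exactly the constants $c_{m,n'}$ of \eqref{cor:SzegoAsymptAvgstat2}. For the total error, grouping the bounds $\dist(a,W_k)^{-\widetilde{q}}$ by the codimension $j$ of the stratum closest to $a$ yields $\sum_{j=0}^d L^{d-j}\int_1^L \ell^{j-1-\widetilde{q}}\,d\ell = O(L^{2d-\widetilde{q}})$ under the hypothesis $\widetilde{q} > 2d$.

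The chief obstacle is precisely this combinatorial/algebraic reassembly: after two nested telescopings combined with the symmetry reductions, one must track signs from inclusion-exclusion, orbit sizes under $\mathcal{S}^d \times \mathcal{R}^d$, and the translations $\tau(a)$ placing each surviving corner at the origin, and verify that these data combine to reproduce exactly the coefficients $c_{m,n}$ and the specific projections $\widehat{\chi}_{m,n}$ of \eqref{def:const1}. The iterated-regularization framework is engineered to make this bookkeeping tractable, but it nevertheless concentrates the main technical work of the proof.
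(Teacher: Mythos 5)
Your overall strategy---reflection and permutation symmetrization, local replacement of $h(g(H)_{\Lambda_L})$ near each face, edge and corner by the model operators $f_n$, a telescoping in $n$, and inclusion--exclusion to produce the constants $c_{m,n}$ and the projections $\widehat{\chi}_{m,n}$---is the same as the paper's, which carries out the corner reduction and the telescoping $f_d-f_0=\sum_{n}(f_n-f_{n-1})$ globally on $[0,L]^d$ rather than cube by cube. Two points in the algebraic part need repair but are not fatal: the index range in your telescoping identity is wrong (the sum must run over the \emph{removed} walls, $k=1,\dots,2d-n$, otherwise the right-hand side equals $2h(g(H)_{G_{2d-n}})-h(g(H)_{\R^d})$), and you never specify the threshold deciding which walls are ``kept''; a wall removed at $\mathcal{O}(1)$ distance contributes an $\mathcal{O}(1)$ remainder per cube, so all near walls must be absorbed into the coefficients by your second telescoping rather than into the error.

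The genuine gap is in the quantitative error analysis. First, the displayed bound is arithmetically wrong: for $\widetilde{q}>2d\geq j$ one has $\int_1^L\ell^{\,j-1-\widetilde{q}}\,\d\ell=\mathcal{O}(1)$, so $\sum_{j}L^{d-j}\int_1^L\ell^{\,j-1-\widetilde{q}}\,\d\ell=\mathcal{O}(L^{d})$, the size of the leading term, not of an error term. Second, and more fundamentally, you only ever estimate diagonal blocks $\chi_a(\cdot)\chi_a$. To know that the limiting coefficients $A_m$ are finite at all, and to bound $|A_m^{(L)}-A_m|$, one must control the trace norm of $\mathbb{E}\bigl[\widehat{\chi}_{m,n}\{f_n-f_{n-1}\}\widehat{\chi}_{m,n}\bigr]$, and this forces a double sum of \eqref{lem:AuxRes1Stat} over both $a$ and $b$. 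Since the right-hand side of \eqref{lem:AuxRes1Stat} is bounded by $\distt(a,\partial_{G'}G)^{-\widetilde{q}/2}\distt(b,\partial_{G'}G)^{-\widetilde{q}/2}$, the double sum factorizes into the square of a single sum; in the codimension-$m$ stratum the tail beyond scale $L$ is $\bigl(\sum_{\ell\geq L}\ell^{\,m-1-\widetilde{q}/2}\bigr)^2=\mathcal{O}(L^{2m-\widetilde{q}})$, and multiplying by the tangential volume $(2L)^{d-m}$ gives $\mathcal{O}(L^{d+m-\widetilde{q}})$, worst case $\mathcal{O}(L^{2d-\widetilde{q}})$ at $m=d$. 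This ``square of a tail'' is exactly where both the hypothesis $\widetilde{q}>2d$ and the exponent $2d-\widetilde{q}$ originate; a diagonal-only accounting cannot produce either, and it also leaves unproved that the coefficients $A_m$ are well defined.
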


\begin{remarks}
\item  If \ref{AssLoc2} holds for a deterministic model, $\Omega = \{0\}$, then the proof of Theorem \ref{lem:AuxRes1} and Remark \ref{rem:CTnonopt} show that $h\in \mathcal{C}^{\widetilde{q}+1}(\R)$ implies \eqref{lem:AuxRes1Stat}. This is probably also true for the general case but would require a refined version of the Combes-Thomas estimate from Lemma \ref{lem:CTRan2}.
\item Under assumption $(ii)$, the expectation in \eqref{lem:AuxRes1Stat} is obsolete.
\item For the special case of random alloy-type Schr\"odinger operators and a function $g$ such that $\supp(g)$ is a subset of the region of spectral localization characterized via fractional moment bounds, \eqref{lem:AuxRes1Stat} seems to be a weak conclusion from \ref{AssLoc2}: It is for instance known that in this case $\mathbb{E} \left[\|\chi_a (h\circ g)(H) \chi_b\| \right]$ is exponentially decaying in $|a-b|$ for any bounded function $h$. But in order to conclude \eqref{lem:AuxRes1Stat} without any smoothness assumption on $h$ one would have to rule out extended boundary states for the random operator $g(H)$. To the author's knowledge this is not known in such generality for $d>1$. \label{rem:TraceEstAndLoc}
\item The bound \eqref{lem:AuxRes1Stat} is not restricted to the assumptions $(i)$ and $(ii)$ in Theorem \ref{lem:AuxRes1}. A special yet very different scenario is the following. If $H$ is a random alloy-type Schr\"odinger operator and we take $g=\idd_\R$ and $h=\id_{(-\infty,E]}$ for an energy $E$ within the region of spectral localization characterized via fractional moment bounds, then \eqref{lem:AuxRes1Stat} holds with exponential decay in $\distt(a,\partial_{G'}G)$ and $\distt(b,\partial_{G'}G)$ \cite{DiGeMu16b}. From the perspective of Remark \ref{rem:TraceEstAndLoc} above, this is the trivial case in which extended boundary states for $g(H)$ can be ruled out in the relevant spectral region. \label{rem:ValidityTraceEst}
\end{remarks}

\section{Proof of Theorem \ref{lem:AuxRes1}}
\label{sec:AuxProofs}

For each of the assumptions $(i)$ and $(ii)$ we first prove an operator-norm version of the estimate \eqref{lem:AuxRes1Stat} and in both cases we employ a suitable functional calculus to rewrite $h(g(H)_{G^{(\prime)}})$ in terms of the resolvent of $g(H)_{G^{(\prime)}}$. Via a Combes-Thomas estimate and the geometric resolvent equation we then localize the operator $h(g(H)_G) - h(g(H)_{G'})$ to $\partial_{G'}G$. Finally, the corresponding trace-norm estimate follows from interpolation with Schatten-$p$ bounds ($p<1$) for the difference of operators on the left-hand side of \eqref{lem:AuxRes1Stat}. Such bounds are a consequence of \ref{AssAg}, see Lemma \ref{lem:AppTrClEst} below. This argument is the same for both assumptions and is only carried out for case $(i)$. 

For assumption \ref{AssLoc1} a polynomial Combes-Thomas estimate for the resolvent of $g(H)_G$, $G\subseteq \R^d$, is known to hold \cite{Aizenman93localizationat}. The holomorphic functional calculus then lifts this mild decay of the resolvent to decay of the operator $h(g(H)_G) - h(g(H)_{G'})$.

In case of assumption \ref{AssLoc2} only decay of the averaged operator kernel is known. While this seems to shut down the standard approach for the Combes-Thomas estimate we show below that an alternative approach - power series expansion of the resolvent far apart from the spectrum and subsequent interpolation in the complex energy parameter - is flexible enough. To the best of our knowledge this approach is not covered in the literature. This is why we included a detailed proof in the next section. Once the Combes-Thomas estimate is established we apply the Helffer-Sj\"ostrand formula to rewrite $h(g(H)_G)$ in terms of the resolvent of $g(H)_G$. This final step is essentially contained in \cite{MR1937430}. For convenience we included some of its details in Section \ref{sec:HelfSjo} below.

Let $g:\R\to\R$ be bounded and compactly supported. For the whole section we abbreviate $A:= g(H)$ and $\widehat\Sigma=\widehat\Sigma_g$. The restriction of the bounded operator $A$ to $G\subset\R^d$ is denoted by $A_G$ and we write $R_z (A_G)=(A_G-z)^{-1}$ for the operators resolvent at $z\in\C\setminus \sigma(A_G)$. In the following we stick to our original setup but one can think of $A$ as an arbitrary bounded ($\omega$-dependent) operator satisfying \eqref{eq:AssagStat} and either \eqref{eq:AssLoc1} or \eqref{eq:AssLoc2}.

\subsection{Proof of Theorem \ref{lem:AuxRes1} under assumption $(ii)$}
\label{sec:AuxProofs1st}
For operators on $\Z^d$ a polynomial Combes-Thomas estimate is proved in \cite[App. II]{Aizenman93localizationat} and reviewed in \cite{AizWarBook}. Their proof carries over to our setup. For the next few lines the notation closely sticks to \cite{AizWarBook}. If matrix elements are substituted by operator kernels $\chi_a (A_G-z)^{-1} \chi_b$ for $a,b\in\Z^d\cap G$, then the proof works if we choose a distance function which is constant on unit cubes $Q_a$, $a\in\Z^d$. The transition to arbitrary $a,b\in G$ then induces a slightly enlarged constant in \eqref{eq:CTConcl} below. The term $(|a-b|+2)^{q'}$ in \eqref{eq:HoloConst} below instead of $(|a-b|+1)^{q'}$ in \cite{AizWarBook} is due to the transition from the $\Z^d$-adapted distance to the original distance. Let $\varepsilon>0$ such that $q=\widetilde{q}+2d+\varepsilon$ and define $q' = q-d-\varepsilon/2 = \widetilde{q} + d + \varepsilon/2$. Then, via the polynomial Combes-Thomas estimate, 
\begin{equation}
\label{eq:CTConcl}
\| \chi_a R_z(A_G)\chi_b \| \leq  \frac{C_1}{(|a-b|+1)^{q'}}
\end{equation}
holds for all $z\in \C$ that satisfy
\begin{equation}
\label{eq:HoloConst}
\dist(z,\widehat{\Sigma}) \geq 1+ \sup_{a\in\Z^d} \sum_{b\in\Z^d} \|\chi_a A_G\chi_b\| \left( (|a-b|+2)^{q'}-1 \right)=: C_{g,\widetilde{q}}-1.
\end{equation}
Fix $a,b\in G'$ such that $Q_a \subset G$. By assumption the function $h$ can be continued analytically onto $\{z\in\C:\, \dist(z,\widehat{\Sigma})  < C_{g,\widetilde{q}}\}$. Let $\Gamma$ be a smooth oriented curve, with winding number $=1$ for the set $\widehat{\Sigma}$, such that 
\begin{equation}
\Ran(\Gamma) \subset \{z\in\C:\, C_{g,\widetilde{q}}-1 < \dist(z,\widehat{\Sigma}) < C_{g,\widetilde{q}}\}
\end{equation}
holds for the range of $\Gamma$. The holomorphic functional calculus then yields
\begin{equation}
\label{eq:Holo1}
\chi_a \left(h(A_G)-h(A_{G'})\right) \chi_b = \frac{1}{2\pi i} \int_{\Gamma}\mathrm{d}z\, h(z) \chi_a R_z(A_{G}) \left(  \chi_G A \chi_{G'\setminus G}\right) R_z(A_{G'})\chi_b,
\end{equation}
where we also applied the geometric resolvent equation and used $Q_a\subset G$. For a set $U\subseteq\R^d$ we define $U_+:= \{n\in (\Z+1/2)^d:\, Q_n\cap U \neq \emptyset\}$. The operator norm of \eqref{eq:Holo1} can then be estimated as
\begin{align}
\label{eq:Holo2}
\left\Vert \eqref{eq:Holo1} \right\Vert &\leq C_2 \sum_{\substack{l\in G_+ \\k\in (G'\setminus G)_+}} \frac{1}{(|a-l|+1)^{q'}}\frac{1}{(|l-k|+1)^{q}} \frac{1}{(|k-b|+1)^{q'}} \notag\\
&\leq \frac{C_3}{\distt(a,\partial_{G'}G)^{q'}},
\end{align}
where we used that $q'>d$ and $q>q'+d$. Because the same bound holds with $b$ instead of $a$ on the right-hand side of \eqref{eq:Holo2} we obtain
\begin{equation}
\label{eq:Holo3}
\left\Vert \eqref{eq:Holo1} \right\Vert \leq \frac{C_4}{\distt(a,\partial_{G'}G)^{q'}+\distt(b,\partial_{G'}G)^{q'}}.
\end{equation}
Finally we interpolate \eqref{eq:Holo3} with Schatten-class bounds for the operator kernel of $h(g(H)_G)$. Such bounds follow from \ref{AssAg} and the next lemma.

\begin{lemma}
	\label{lem:AppTrClEst}
	Let $B$ be a selfadjoint bounded operator on $L^2(\R^d)$ such that
	\begin{equation}
	\label{lem:AppTrClEstAspt}
		C_{B,p} := \sup_{ a\in\R^d } \| \chi_a B^2 \chi_a \|_p <\infty
	\end{equation}
	holds. Then, for functions $h:\R\to\C$ such that $|h|\leq C_h| \cdot |^{\gamma_h}$ holds for constants $C_h$ and $0<\gamma_h\leq 1$,
	\begin{equation}
		\label{lem:AppTrClEstStat}
		\sup_{G\subseteq\R^d} \sup_{ a\in\R^d } \| \chi_a h(B_G) \|_{\frac{2p}{\gamma_h}} \leq C_{B,p}^{p} C_h^{\frac{2p}{\gamma_h}}.
	\end{equation}
\end{lemma}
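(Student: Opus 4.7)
The plan is to dominate $\|\chi_a h(B_G)\|_{2p/\gamma_h}$ by the hypothesised quantity $\|\chi_a B^2 \chi_a\|_p$ in three moves: a functional-calculus step that replaces $h(B_G)$ by a fractional power of $B_G^2$, an operator Jensen inequality that pulls the spatial cutoff $\chi_a$ inside that fractional power, and a compression step comparing $B_G^2$ with $B^2$. Throughout, write $q := 2p/\gamma_h$, so that $\gamma_h q/2 = p$.

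First I would exploit the singular-value identity $\|\chi_a h(B_G)\|_q^2 = \|\chi_a h(B_G) h(B_G)^* \chi_a\|_{q/2}$, valid because the positive operator on the right shares the nonzero eigenvalues of $|\chi_a h(B_G)|^2$. Selfadjointness of $B_G$ together with the pointwise bound $|h(t)|^2 \leq C_h^2 |t|^{2\gamma_h}$ yields, via the functional calculus, the operator inequality $h(B_G) h(B_G)^* = |h|^2(B_G) \leq C_h^2 (B_G^2)^{\gamma_h}$; Weyl's monotonicity of the eigenvalue sequence under $0 \leq A \leq B$ makes the Schatten quasi-norm monotone on positive operators for every $p>0$ and therefore produces $\|\chi_a h(B_G)\|_q^2 \leq C_h^2 \|\chi_a (B_G^2)^{\gamma_h} \chi_a\|_{q/2}$.

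Next I would invoke the Hansen--Pedersen operator Jensen inequality: since $t \mapsto t^{\gamma_h}$ is operator concave on $[0,\infty)$ by L\"owner--Heinz ($0 < \gamma_h \leq 1$) and vanishes at the origin, one has $\chi_a T^{\gamma_h} \chi_a \leq (\chi_a T \chi_a)^{\gamma_h}$ for every $T \geq 0$. Applied with $T = B_G^2$ and combined with the power rule $\|S^{\gamma_h}\|_{q/2} = \|S\|_p^{\gamma_h}$ for positive $S$ (using $\gamma_h q/2 = p$), this upgrades the previous estimate to $\|\chi_a h(B_G)\|_q^2 \leq C_h^2 \|\chi_a B_G^2 \chi_a\|_p^{\gamma_h}$.

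Finally I would remove the dependence on $G$. Using $\chi_G \leq \idd$ in the middle of $B_G^2 = \chi_G B \chi_G B \chi_G$, together with the commutativity of the spatial projections $\chi_a$ and $\chi_G$, yields the operator inequality $\chi_a B_G^2 \chi_a \leq \chi_G (\chi_a B^2 \chi_a) \chi_G$. The right-hand side is a projection compression of the positive operator $\chi_a B^2 \chi_a$, and its nonzero eigenvalues coincide with those of $\sqrt{\chi_a B^2 \chi_a}\, \chi_G \sqrt{\chi_a B^2 \chi_a} \leq \chi_a B^2 \chi_a$, so another appeal to Weyl monotonicity gives $\|\chi_G (\chi_a B^2 \chi_a) \chi_G\|_p \leq \|\chi_a B^2 \chi_a\|_p \leq C_{B,p}$. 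Assembling the three estimates and raising to the $q$-th power delivers $\|\chi_a h(B_G)\|_q^q \leq C_{B,p}^p\, C_h^{2p/\gamma_h}$ uniformly in $G$ and $a$, as claimed. The only delicate ingredient is the Hansen--Pedersen inequality; the quasi-norm regime $p<1$ causes no additional difficulty because every comparison is phrased at the level of Weyl eigenvalue sequences rather than via a triangle inequality.
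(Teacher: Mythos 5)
Your argument is correct and follows essentially the same route as the paper: reduce to the eigenvalues of $\chi_a|h|^2(B_G)\chi_a$, dominate $|h|^2(B_G)$ by $C_h^2 (B_G^2)^{\gamma_h}$, use concavity/operator monotonicity of $t\mapsto t^{\gamma_h}$ to pull the cutoff inside the fractional power, and compare the resulting compression with $\chi_a B^2\chi_a$ via Weyl's eigenvalue monotonicity, arriving at the same final bound $\|\chi_a h(B_G)\|_{2p/\gamma_h}^{2p/\gamma_h}\leq C_{B,p}^{p}C_h^{2p/\gamma_h}$. The only cosmetic differences are that you invoke the full Hansen--Pedersen operator Jensen inequality where the paper only needs its quadratic-form (scalar Jensen) version, and you remove the $G$-dependence after, rather than before, pulling $\chi_a$ inside the power.
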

The proof of the Lemma is given below. For $p>\delta>0$ and an operator $A$ the bound
	\begin{align}
		\|A\|_{p}^p \leq \|A\|^{\delta} \|A\|_{p-\delta}^{p-\delta}
	\end{align}
	holds. With $p=1$ and $\delta=\widetilde{q}/q'\in (0,1)$ we obtain
	\begin{align}
	\left\Vert  \chi_a \{ h(A_G) - h(A_{G'})\} \chi_b  \right\Vert_1 &\leq \left\Vert \chi_a \{ h(A_G) - h(A_{G'})\} \chi_b  \right\Vert^{\widetilde{q}/q'}\notag\\
	&\quad \times \left\Vert \chi_a \{ h(A_G) - h(A_{G'})\} \chi_b  \right\Vert_{1-\widetilde{q}/q'}^{1-\widetilde{q}/q'}. 	\label{eq:CorGeomS2}
	\end{align}
	The first term on the right-hand side of \eqref{eq:CorGeomS2} can be estimated via \eqref{eq:Holo3}. For the second term we apply Lemma \ref{lem:AppTrClEst} to the operator $A^2$. Assumption \ref{AssAg} ensures that \eqref{lem:AppTrClEstAspt} holds and the bound on $h$ follows from smoothness and $h(0)=0$. The lemma yields
	\begin{align}
	\left\Vert \chi_a \{ h(A_G) - h(A_{G'})\} \chi_b  \right\Vert_{1-\widetilde{q}/q'}^{1-\widetilde{q}/q'} &\leq \left\Vert\chi_a  h(A_G) \chi_b \right\Vert_{1-\widetilde{q}/q'}^{1-\widetilde{q}/q'} +\left\Vert\chi_a  h(A_{G'})\chi_b \right\Vert_{1-\widetilde{q}/q'}^{1-\widetilde{q}/q'}\notag \\
	&\leq C_5
	\end{align}
and overall we found that
\begin{align}
\left\Vert \chi_a \{ h(A_G) - h(A_{G'})\} \chi_b  \right\Vert_1 \leq \frac{C_6}{\distt(a,\partial_{G'}G)^{\widetilde{q}}+(
\distt(b,\partial_{G'}G)^{\widetilde{q}}}.
\end{align}
for $a,b\in G'$ such that $Q_a\subset G$. The proof for $Q_b\subset G$ follows along the same lines.
\qed

\begin{proof}[Proof of Lemma \ref{lem:AppTrClEst}]
	The singular values of $\chi_a h(B_G)$ are
		\begin{align}
			\mu_n(\chi_a h(B_G)) &= \sqrt{\lambda_n(\chi_a |h|^2(B_G) \chi_a )}.		\label{eq:AppTrClEst1}
		\end{align}
Moreover, because $|h| = | \cdot |^{\gamma_h} \widetilde{h}$ for some non-negative function $\widetilde{h}$ which is bounded by $C_{h}$, the form inequality
		\begin{align}
			\chi_a  |h|^2(B_G) \chi_a &= \chi_a \vert B_G \vert^{\gamma_h} \widetilde{h}^2(B_G)\vert B_G \vert^{\gamma_h} \chi_a\notag\\
			&\leq C_h^2 \chi_a \vert B_G \vert^{2\gamma_h} \chi_a \label{eq:AppTrClEst2}
		\end{align}
	holds. The function $x\to x^{\gamma_h}$ is operator monotone because $0<\gamma_h<1$. Hence the form inequality
	\begin{equation}
		\label{eq:AppTrClEst3}
		|B_G|^{2\gamma_h} = (\chi_G B \chi_G B \chi_G)^{\gamma_h} 
		\leq  ( \chi_G B^2 \chi_G )^{\gamma_h} 
	\end{equation}
	holds. \eqref{eq:AppTrClEst2} and \eqref{eq:AppTrClEst3} together with the bound
	\begin{equation}
		\left\langle \psi, \chi_a(\chi_G B^2  \chi_G)^{\gamma_h}\chi_a \psi  \right\rangle  \leq \left\langle \psi, \chi_a\chi_G B^2 \chi_G\chi_a \psi  \right\rangle ^{\gamma_h}
	\end{equation}
	for normalized $\psi\in L^2(\R^d)$ yield
	\begin{equation}
		\label{eq:AppTrClEst4}
		\lambda_n(\chi_a |h|^2(B_G) \chi_a ) \leq C_h^2 \lambda_n(\chi_a\chi_G B^2\chi_G\chi_a)^{\gamma_h}.
	\end{equation}
	Let $p' = 2p/\gamma_h$. Together with \eqref{eq:AppTrClEst1} this yields
		\begin{align}
			\|\chi_a h(B_G)\|_{p'}^{p'} &= \sum_{n\in\N} \mu_n(\chi_a h(B_G))^{p'} \leq C^{p'}_h \sum_{n\in\N}\lambda_n(\chi_a\chi_G B^2\chi_G\chi_a)^{p'\gamma_h/2}\notag \\
			&\leq C^{2p/\gamma_h}_h \|\chi_a B^2\chi_a \| _{p}^{p}.
		\end{align} 
\end{proof}

\subsection{Combes-Thomas estimate under assumption \ref{AssLoc2}}

In this section we prove that averaged decay of the operator kernel of $A$ is sufficient to deduce averaged decay for the operator kernel of the resolvent at complex energies away from the spectrum. We state two different versions of this result, Lemma \ref{lem:CTRan} and Lemma \ref{lem:CTRan2}. The first Lemma is not needed for the proof of Theorem \ref{lem:AuxRes1} $(i)$ but serves to illustrate the method and can be directly compared to the classical Combes-Thomas estimate. Detailed proofs are included because, to the best of our knowledge, this approach is not covered in the literature.

\begin{lemma}
	\label{lem:CTRan} Assume that \ref{AssLoc2} holds and let $0<\theta<1/2$ be fixed. Then there exist constants $C_{\theta},\mu_{\theta}>0$ such that for $z\in \C\setminus \widehat{\Sigma}$
	\begin{equation}
	\label{cor:CTRanStat}
	\left\Vert \mathbb{E} \left[  \chi_a R_z(A_G) \chi_b  \right]\right\Vert\leq \frac{C_{\theta}}{\distt (z,\widehat{\Sigma})} e^{-\mu_{\theta}\distt(z,\widehat{\Sigma}) |a-b|^{\theta}}.
	\end{equation}
\end{lemma}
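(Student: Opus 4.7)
The plan is a two-stage approach: first derive the bound when $|z|$ is large (Neumann regime), and then extend to all $z \notin \widehat{\Sigma}$ by interpolating in the complex energy parameter. Throughout, Jensen's inequality $\|\E[X]\| \leq \E[\|X\|]$ lets me replace the norm of the expectation by the expectation of the norm, which is what assumption \ref{AssLoc2} naturally controls.

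For the first stage, fix $|z| \geq 2\|g\|_\infty$ so that the Neumann series $R_z(A_G) = -\sum_{n\geq 0} A_G^n/z^{n+1}$ converges in operator norm. Expanding each $\chi_a A_G^n \chi_b$ by inserting resolutions of identity $\sum_{c\in\Z^d} \chi_c = \id$ yields
$$\|\chi_a A_G^n \chi_b\| \leq \sum_{c_1,\ldots,c_{n-1}} \prod_{j=0}^{n-1} \|\chi_{c_j} A \chi_{c_{j+1}}\|, \qquad c_0 := a,\ c_n := b.$$
The crucial step is to bound the expectation of this product. Combining the deterministic bound $\|\chi_c A \chi_{c'}\| \leq \|g\|_\infty$ with H\"older at equal weights $1/n$ and assumption \ref{AssLoc2} on each factor gives
$$\E[\|\chi_a A_G^n \chi_b\|] \leq \|g\|_\infty^{n-1} C_2 \sum_{c_1,\ldots,c_{n-1}} \prod_{j=0}^{n-1} e^{-\mu|c_j-c_{j+1}|/n}.$$
The right-hand side is an $(n-1)$-fold lattice convolution of $e^{-\mu|\cdot|/n}$, which a short Fourier argument (or iteration of the explicit one-dimensional computation) bounds by a polynomial-times-exponential of the form $(C|a-b|)^{d(n-1)}/((n-1)!)^d \cdot e^{-\mu'|a-b|/n}$. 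Plugging back into the Neumann series and optimizing the truncation index $n$ --- balancing the polynomial prefactor against the exponential gain --- produces the desired bound in the regime $|z|\asymp\dist(z,\widehat{\Sigma})$ large.

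For arbitrary $z\notin\widehat{\Sigma}$, the idea is to use a Taylor expansion around a reference point $z_0$ far from the spectrum: for $|z-z_0|<\dist(z_0,\widehat{\Sigma})$,
$$R_z(A_G) = \sum_{k\geq 0}(z-z_0)^k R_{z_0}(A_G)^{k+1}.$$
This reduces the problem to bounding averaged kernels of $R_{z_0}(A_G)^{k+1}$. These are handled by the same H\"older scheme: insert resolutions of identity between the $R_{z_0}$'s, bound all but one factor deterministically by $\|R_{z_0}(A_G)\| \leq \dist(z_0,\widehat{\Sigma})^{-1}$, and use the first-stage averaged kernel bound for the remaining one. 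Choosing $z_0$ as a function of $z$ (e.g.\ $z_0 = z + i\tau$ with $\tau$ a large multiple of $\dist(z,\widehat{\Sigma})$) and optimizing over $\tau$ and $k$ should yield the uniform bound $C_\theta\,\dist(z,\widehat{\Sigma})^{-1}\exp\bigl(-\mu_\theta \dist(z,\widehat{\Sigma})|a-b|^\theta\bigr)$.

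The main technical obstacle is the optimization of the truncation index $n$ in the first stage. The polynomial prefactor $|a-b|^{d(n-1)}/((n-1)!)^d$ forces the optimal $n$ to be of order $\sqrt{|a-b|}$ up to logarithmic corrections, and these corrections are exactly the reason the rate $\theta$ cannot be taken all the way to $1/2$ but can be chosen arbitrarily close to it. A secondary subtlety is the restriction to $G\subseteq\R^d$: since $\chi_c$ commutes with $\chi_G$ whenever $Q_c\subset G$, the inserted resolutions of identity respect the restriction, and the contributions of boundary cubes can be absorbed by upper-bounding all $c_j$-sums by sums over the full lattice $\Z^d$.
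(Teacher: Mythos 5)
Your first stage is essentially the paper's own estimate far from the spectrum: Neumann expansion, insertion of lattice partitions, H\"older with equal weights $1/n$ (which is exactly what degrades the rate from $\mu$ to $\mu/n$ and forces $\theta<1/2$), summation of the intermediate lattice sums, and optimization of the truncation index of order $|a-b|^{1/2}$ up to corrections. That part is sound. The genuine gap is your second stage, i.e.\ the passage from the Neumann regime to arbitrary $z\notin\widehat{\Sigma}$, which is precisely where the factor $\distt(z,\widehat{\Sigma})$ in the exponent of \eqref{cor:CTRanStat} has to be produced. Expanding $R_z(A_G)=\sum_k (z-z_0)^k R_{z_0}(A_G)^{k+1}$ around $z_0=z+i\tau$ and then ``bounding all but one factor deterministically by $\|R_{z_0}(A_G)\|\leq \distt(z_0,\widehat{\Sigma})^{-1}$'' cannot work: a factor bounded in operator norm carries no spatial decay, so either the inserted resolutions of identity adjacent to it produce divergent lattice sums, or, if you insert only one partition next to the single factor that retains kernel decay, the surviving decay connects $a$ (or $b$) to a summed-over intermediate cube and no decay in $|a-b|$ remains. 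The natural repair --- keeping decay in every factor of $R_{z_0}^{k+1}$ via another H\"older step --- divides the rate by $k+1$ and generates lattice-sum prefactors of size $(Ck^{d/\theta})^{k}$, while the Taylor coefficients only supply $(\tau/(\tau+\eta))^{k}\approx e^{-k\eta/\tau}$ with $\eta=\distt(z,\widehat{\Sigma})$ small; since the relevant range is $k\sim \tau/\eta$, the super-exponential combinatorial growth is not absorbed, and truncating the series does not help because the Taylor remainder carries no $|a-b|$ decay at all.

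The paper resolves exactly this point differently: it applies the Stein (Hadamard three-lines) interpolation theorem on the horizontal strip $\{m<\Im z<M\}$ to the \emph{analytic operator-valued} function $z\mapsto \mathbb{E}[\chi_a R_z(A)\chi_b]$, bounding $F_m\leq 1/m$ trivially and $F_M$ by your stage-one argument; interpolation then yields $F_{2m}\leq F_m^{(M-2m)/(M-m)}F_M^{m/(M-m)}$, and the exponent $m/(M-m)\propto \distt(z,\widehat{\Sigma})$ is what multiplies $|a-b|^{\theta}$ in the final bound. Note also that this requires working with the norm of the expectation (an analytic function of $z$) rather than the expectation of the norm, so your opening reduction via Jensen, while harmless for the Neumann regime, would have to be undone for any interpolation argument. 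To complete your proof you would need to replace the Taylor-expansion step by such a complex-interpolation (or comparable) mechanism.
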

For sets $G\subset G'\subseteq\R^d$ we recall the definition $\partial_{G'}G:= \partial G \cap \intt(G')$ for the boundary of $G$ in $G'$.

\begin{lemma}
\label{lem:CTRan2}
Assume that \ref{AssLoc2} holds and let $0<\theta<1/2$ be fixed. Then there exist constants $C_{\theta},\mu_{\theta}>0$ such that for $G\subset G'\subseteq\R^d$ and $a,b\in G'$ with $Q_a\subset G$ or $Q_b\subset G$ the bound 
\begin{align}
\left\Vert \mathbb{E} \left[  \chi_a \left(R_z(A_G)-R_z(A_{G'})\right) \chi_b  \right]\right\Vert \leq  \frac{C_{\theta}}{\distt(z,\widehat{\Sigma})} e^{-\mu_{\theta}\distt(z,\widehat{\Sigma}) \left(\distt(a,\partial_{G'}G)^{\theta}+\distt(b,\partial_{G'}G)^{\theta}\right)}
\label{cor:CTRanStat2}
\end{align}
holds for all $z \in \C\setminus \widehat{\Sigma}$. 
\end{lemma}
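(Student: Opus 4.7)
The plan is to adapt the Neumann--series argument that presumably proves Lemma~\ref{lem:CTRan} to the resolvent difference, the only new ingredient being an extra geometric constraint on the paths generated by the power series, forcing them to cross the boundary layer $\partial_{G'}G$. The averaged single--resolvent bound of Lemma~\ref{lem:CTRan} cannot simply be inserted into the second resolvent identity, because $\mathbb{E}[R_z(A_G)\, V\, R_z(A_{G'})]$ does not factor into single--resolvent expectations, so one really has to redo the path expansion and track the boundary constraint explicitly.

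First I would apply the second resolvent identity to write
\begin{equation*}
R_z(A_G) - R_z(A_{G'}) = R_z(A_G)\, V\, R_z(A_{G'}), \qquad V := A_{G'} - A_G,
\end{equation*}
and observe, using $\chi_{G'} = \chi_G + \chi_{G'\setminus G}$, that $V$ is supported in unit cubes that meet $G'\setminus G$; thus in any path--expansion at least one edge must cross $\partial_{G'}G$. For complex energies with $|z| > \|A\|$, I would then expand both resolvents in Neumann series and collect terms to obtain
\begin{equation*}
\chi_a \bigl(R_z(A_G) - R_z(A_{G'})\bigr) \chi_b = \sum_{n=1}^\infty \frac{1}{z^{n+1}} \sum_{k=0}^{n-1} \chi_a\, A_G^{k}\, V\, A_{G'}^{n-1-k}\, \chi_b.
\end{equation*}
Inserting unit--cube projections $\chi_{c_j}$ between successive $A$'s turns each term into a sum over paths $a = c_0 \to c_1 \to \dots \to c_n = b$ whose intermediate vertices are constrained to $G$ or $G'$ according to whether they come from $A_G$ or $A_{G'}$, with the $V$--step forcing at least one $c_j \in G'\setminus G$.

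Next, pulling $\mathbb{E}[\,\cdot\,]$ through the operator norm via $\|\mathbb{E}[X]\| \leq \mathbb{E}[\|X\|]$, bounding the product of operator norms by the product of the factor norms, and applying Hölder's inequality with exponent $n$ to the $n$ factors together with the uniform bound $\|A\| \leq C_\infty$ (from $(\mathcal{A}_1)$) and assumption \ref{AssLoc2} yields
\begin{equation*}
\mathbb{E}\Bigl[\prod_{j=1}^n \|\chi_{c_{j-1}} A \chi_{c_j}\|\Bigr] \leq C_\infty^{\,n-1} C_2 \exp\!\Bigl(-\tfrac{\mu}{n}\sum_{j=1}^n |c_{j-1}-c_j|\Bigr).
\end{equation*}
By the triangle inequality together with the boundary--crossing constraint one has $\sum_j |c_{j-1}-c_j| \geq \dist(a,\partial_{G'}G) + \dist(b,\partial_{G'}G) =: L_{\mathrm{bd}}$, while the sum over intermediate cubes contributes a combinatorial factor of order $(n/\mu)^{d(n-1)}$, each inner sum behaving like an $\R^d$--integral against an exponential with small rate $\mu/n$. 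Summing the geometric series in $n$ and performing a saddle--point optimization (choosing $n \sim L_{\mathrm{bd}}^{\theta}$) produces, for $|z|$ large, a stretched--exponential bound of the form $\exp(-\mu_\theta L_{\mathrm{bd}}^{\theta})$ for any $\theta < 1/2$, the restriction $\theta<1/2$ arising from balancing $(n/\mu)^{dn}$ against $\exp(-\mu L_{\mathrm{bd}}/n)$.

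Finally, to extend this estimate from the regime $|z| > \|A\|$ to an arbitrary $z \notin \widehat\Sigma$, I would employ the complex--analytic interpolation already used in Lemma~\ref{lem:CTRan}: the operator--valued function $z \mapsto \mathbb{E}[\chi_a(R_z(A_G) - R_z(A_{G'}))\chi_b]$ is holomorphic on $\C \setminus \widehat\Sigma$ with the trivial bound $2/\dist(z,\widehat\Sigma)$, so a Hadamard three--lines / Phragmén--Lindelöf argument on a suitable shrinking strip transfers the stretched--exponential decay from infinity down to a neighbourhood of the spectrum, producing the factor $\dist(z,\widehat\Sigma)$ inside the exponent and the prefactor $1/\dist(z,\widehat\Sigma)$ in the final estimate. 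The main obstacle that I expect is precisely this combinatorial saddle--point estimate: it is the step where the rate collapses from the naive exponential to a stretched exponential with $\theta$ strictly below $1/2$, and where the combinatorial factor $(n/\mu)^{dn}$ in dimension $d$ first enters the analysis.
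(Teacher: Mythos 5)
Your strategy is sound and reaches the stated bound, but the key new step is organized differently from the paper. The paper also starts from the (geometric) resolvent identity, but it does \emph{not} redo a joint path expansion for $R_z(A_G)\,V\,R_z(A_{G'})$: on the horizontal line $\Im z=M=2(\|A\|_\infty+1)$ it decouples the expectation by H\"older with exponent $3$,
\begin{equation*}
\mathbb{E}\bigl[\|\chi_aR_z(A_{G'})\chi_k\|^3\bigr]^{1/3}\,\mathbb{E}\bigl[\|\chi_kA\chi_l\|^3\bigr]^{1/3}\,\mathbb{E}\bigl[\|\chi_lR_z(A_G)\chi_b\|^3\bigr]^{1/3},
\end{equation*}
uses the deterministic bound $\|R_z\|\le 1/M$ to reduce the third moments of the resolvent kernels to first moments, and then simply reuses the single-resolvent stretched-exponential bound already established at height $M$ in the proof of Lemma \ref{lem:CTRan} (applied to $A_G$ and $A_{G'}$, whose kernels are dominated by that of $A$); summation over $k\in(G'\setminus G)_+$, $l\in G_+$ then gives the $F_M$ bound, and the Stein/three-lines interpolation in the strip is identical to yours. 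So your claim that the expectation ``does not factor, so one really has to redo the path expansion'' is an overstatement: H\"older decoupling does the job and is shorter, at the cost of dividing the decay rate by $3$, whereas your direct double Neumann expansion with H\"older exponent $n$ across all factors avoids that loss but forces you to repeat the combinatorics. Two small points of care in your version, both resolved exactly as in the paper's proof of Lemma \ref{lem:CTRan}: you must split the exponential $e^{-\frac{\mu}{n}\sum_j|c_{j-1}-c_j|}$ into two halves (one half for the lower bound $\gtrsim\distt(a,\partial_{G'}G)+\distt(b,\partial_{G'}G)$ enforced by the $V$-step, the other for summability of the intermediate cubes), and you cannot literally ``sum the geometric series in $n$'' with the factor $(Cn^d)^{n}$ present --- that series diverges; you must truncate at $N\sim L_{\mathrm{bd}}^{\theta}$ and bound the tail by the crude bound $\sum_{n>N}\|A\|^n/|z|^{n+1}\lesssim 2^{-N}$, which is also where the requirement $\Im z=M\ge 2(\|A\|_\infty+1)$ (rather than just $|z|>\|A\|$) enters so that the good bound holds on a full horizontal line for the interpolation.
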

\begin{remark}
 The reason why only fractional exponential decay is established stems from the rather bold application of H\"older's inequality in \eqref{eq:PfCtRan4} below. 
For a deterministic model, i.e. $\Omega = \{0\}$, the proof yields exponential decay ($\theta=1$ in \eqref{cor:CTRanStat} and \eqref{cor:CTRanStat2}).  \label{rem:CTnonopt} 
\end{remark}

\begin{proof}[Proof of Lemma \ref{lem:CTRan}]
For convenience we fix $\theta=1/4$ for the proof.
Let $a,b\in \R^d$ be fixed. Then, for fixed $m$ with $0<m<M:=2(\|A\|_{\infty}+1)$,
\begin{equation}
\{z\in\C:\ m<\Im(z)<M\}=:S_{m,M}\ni z \mapsto f(z):= \mathbb{E}\left[\chi_a R_z(A)\chi_b \right],
\end{equation} 
is an operator-valued analytic map which is continuous on $\overline{S_{m,M}}$ and bounded by $1/m$. For $m\leq t \leq M$ we define
\begin{equation}
F_t := \sup_{x\in\R} \left\Vert f(x+it)\right\Vert \leq \frac{1}{t}.
\end{equation}
Then the Stein interpolation theorem \cite{bennett1988interpolation} states that for $m\leq t \leq M$ the bound
\begin{equation}
\label{eq:SteinInterpolStat}
F_t \leq F_m^{\frac{M-t}{M-m}} F_M^{\frac{t-m}{M-m}}
\end{equation}
holds, where $F_m$ can be estimated by $1/m$. 	
In order to estimate $F_M$ we expand the resolvent $R_z(A)$ as a Neumann series. This yields
\begin{align}
F_M = \sup_{x\in\R} |f(x+iM)| &\leq \sum_{l=0}^{N} \frac{\mathbb{E}\left[\Vert \chi_a A^l \chi_b \Vert\right]}{2^{l+1}(\|A\|_{\infty}+1)^{l+1}} + \sum_{l=N+1}^{\infty} \frac{\mathbb{E}\left[\Vert \chi_a A^l \chi_b \Vert \right]}{2^{l+1}(\|A\|_{\infty}+1)^{l+1}}\notag\\
&=: I_1+I_2
\label{eq:PfCtRan2}
\end{align}
for some $N>0$ which is specified below. We estimate $I_2$ as
\begin{equation}
\label{eq:PfCtRan3}
I_2\leq \sum_{l=N+1}^{\infty} \frac{\|A\|_{\infty}^l}{2^{l+1}(\|A\|_{\infty}+1)^{l+1}} \leq \frac{1}{2^N}.
\end{equation} 
To estimate $I_1$ we set, for fixed $l>0$, $k_0:=a$ and $k_l:=b$. An application of H\"older's inequality then yields
\begin{align}
\mathbb{E}\left[\Vert \chi_a A^l \chi_b \Vert\right] &\leq \sum_{k_1,...,k_{l-1}\in \Zd} \mathbb{E}\Big[\prod_{j=1}^{l} \Vert \chi_{k_{j-1}} A \chi_{k_j} \Vert \Big] \notag\\
&\leq \sum_{k_1,...,k_{l-1}\in \Zd} \prod_{j=1}^{l} \mathbb{E}\left[ \Vert \chi_{k_{j-1}} A \chi_{k_j} \Vert^l\right]^{1/l} \notag\\
&\leq C_2^l\|A\|_{\infty}^{l-1} \sum_{k_1,...,k_{l-1}\in \Zd} \prod_{j=1}^{l} e^{-\frac{\mu}{l}|k_{j-1}-k_j|},
\label{eq:PfCtRan4}
\end{align} 
where, for the last inequality, we used \ref{AssLoc2}. The product in \eqref{eq:PfCtRan4} can be estimated as 
\begin{equation}
\label{eq:PfCtRan5}
\prod_{j=1}^{l} e^{-\frac{\mu}{l}|k_{j-1}-k_j|} \leq e^{-\frac{\mu}{2l}|a-b|} \prod_{j=1}^{l} e^{-\frac{\mu}{2l}|k_{j-1}-k_j|}.
\end{equation}
Let us assume that $|a-b|>1$. Then the sum defining $I_1$ starts at $l=1$ and we obtain the upper bound
\begin{align}
I_1 &\leq  \sum_{l=1}^{N} \frac{\|A\|_{\infty}^{l-1}}{2^{l+1}(\|A\|_{\infty}+1)^{l+1}} e^{-\frac{\mu}{2l}|a-b|} \sum_{k_1,...,k_{l-1}\in \Zd}  \prod_{j=1}^{l} e^{-\frac{\mu}{2l}|k_{j-1}-k_j|}\notag \\
&\leq \sum_{l=1}^{N} 2^{-(l+1)} e^{-\frac{\mu}{2l}|a-b|} \left(\sum_{k\in\Zd} e^{-\frac{\mu}{2l}|k|} \right)^{l-1}.
\label{eq:PfCtRan6}
\end{align}
The $k$-sum on the right-hand side of \eqref{eq:PfCtRan6} can be estimated from above by $Bl^d$
for an $l$-independent constant $B$. 
Hence $F_M$ can be estimated as
\begin{align}
F_M &\leq e^{-N\log(2)}  + e^{-\frac{\mu}{2N}|a-b|} \sum_{l=1}^{N} \frac{(Bl^d)^{l-1}}{2^{l+1}}\notag \\
&\leq  e^{-N \log(2)}  + e^{-\frac{\mu}{2N}|a-b|}  N B^N N^{dN}.
\label{eq:PfCtRan8}
\end{align}
For the choice $N=|a-b|^{1/4}$, this yields
\begin{align}
F_M  &\leq   e^{-\log(2)|a-b|^{1/4}} +  e^{-\frac{\mu}{2}|a-b|^{3/4}} |a-b|^{1/4} e^{|a-b|^{1/4}\left( \log(B)+\frac{d}{4}\log|a-b|\right)} \notag\\
&\leq C e^{-\mu'|a-b|^{1/4}} \label{eq:PfCtRan9}
\end{align}
for $\mu'= \min\{\mu/2, \log(2)\}$.
With \eqref{eq:SteinInterpolStat} for $t=2m$ we arrive at
\begin{equation}
\label{eq:PfCtRan12}
F_{2m} \leq F_m^{\frac{M-2m}{M-m}} F_M^{\frac{m}{M-m}} \leq \left(\frac{1}{m}\right)^{\frac{M-2m}{M-m}} \widetilde{C}^{\frac{m}{M-m}} e^{-\frac{\widetilde{\mu} m}{M-m} |a-b|^{1/4}}.
\end{equation}
For $\eta>0$ this can be written as
\begin{equation}
\label{eq:PfCtRan13}
\sup_{E\in \R} \big\Vert\mathbb{E}\left[ \chi_a R_{E+i\eta}(A) \chi_b \right] \big\Vert \leq \left(\frac{2}{\eta}\right)^{\frac{M-\eta}{M-\eta/2}} C^{\frac{\eta}{2M-\eta}} e^{-\frac{\mu' \eta}{2M-\eta}|a-b|^{1/4}}.
\end{equation}
Because $M\geq 2$ we get for $\eta \in (0,1)$ the more appealing bound
\begin{equation}
\label{eq:PfCtRan14}
\sup_{E\in \R} \big\Vert\mathbb{E}\left[ \chi_a, R_{E+i\eta}(A) \chi_b \right] \big\Vert \leq \frac{\widehat{C}}{\eta} e^{-\widehat{\mu}\eta |a-b|^{1/4}}
\end{equation}
for constants $\widehat{C},\widehat{\mu}>0$ that are independent of $\eta\in (0,1)$ and $a,b\in\R^d$. For $\eta<0$ the same interpolation argument can be performed below the real axis. This yields \eqref{cor:CTRanStat} in case $z=E+i\eta\in \C\setminus \widehat\Sigma$ is such that $\distt(E, \widehat{\Sigma}) \leq |\eta|$. If $\distt(E, \widehat{\Sigma}) \geq |\eta|$, then  \eqref{cor:CTRanStat} would follow from interpolation on a vertical strip. But in this case interpolation is not even needed since the resolvent can directly be expanded.  
\end{proof}

\begin{proof}[Proof of Lemma \ref{lem:CTRan2}]
We again choose $\theta=1/4$ for notational convenience and do the proof for $z=E+i\eta$ with $\eta>0$ and $E\in\widehat{\Sigma}$. Let $G\subset G' \subseteq \R^d$ and choose$a \in G$ with $Q_a\subset G$ and $b\in G'$. Fix $0<m<M$ with $M:= 2(\|A\|_{\infty}+1)$. Except of the bound for $F_M$ the proof is then the same as the proof of Lemma \ref{lem:CTRan}. We start by rewriting the difference $R_z(A_G)-R_z(A_{G'})$ via the resolvent equation:
\begin{equation}
\chi_G\left( R_z(A_G)-R_z(A_{G'})\right) \chi_{G'} = R_z(A_{G}) \left(  \chi_G A \chi_{G'\setminus G}\right) R_z(A_{G'}).
\end{equation}
H\"older's inequality then yields for $a,b$ as chosen above and $z\in \C$ with $\Im(z)=M$
\begin{align}
&\left\Vert \mathbb{E} \left[  \chi_a \left(R_z(A_G)-R_z(A_{G'})\right) \chi_b  \right]\right\Vert \notag\\
&\qquad \leq \sum_{\substack{k\in (G'\setminus G)_+\\ l\in G_+}} \mathbb{E}\left[\|\chi_a R_z(A_{G'})\chi_k\|^3\right]^{1/3}\mathbb{E}\left[\|\chi_k A\chi_l\|^3\right]^{1/3} \mathbb{E}\left[\|\chi_l R_z(A_{G})\chi_b\|^3\right]^{1/3} \notag\\
&\qquad \leq \frac{C^{1/3}\|A\|_{\infty}^{2/3}}{M^{4/3}}\sum_{\substack{k\in (G'\setminus G)_+\\ l\in G_+}} e^{-\mu|k-l|/3} \mathbb{E}\left[\Vert \chi_aR_z(A_{G'})\chi_k \Vert \right]^{1/3} \mathbb{E}\left[\Vert \chi_l R_z(A_{G})\chi_b \Vert \right]^{1/3},\label{eq:Pf2CtRan1}
\end{align}
where for the last inequality we used \ref{AssLoc2} and estimated $\Vert \chi_a R_z(A_{G})\chi_k \Vert$ respectively $\Vert \chi_k R_z(A_{G})\chi_b \Vert$ by $1/|\Im(z)|=1/M$. The two remaining expectations can now be estimated as in the proof of Lemma \ref{lem:CTRan}. Because the operator kernel of $A_{G^{(\prime)}}$ can be estimated by the operator kernel of $A$, there exist constants $C,\mu>0$, which are independent of $G$ and $G'$, such that
\begin{align}
\mathbb{E}\left[\Vert \chi_aR_{E+iM}(A_{G})\chi_k \Vert \right] &\leq C e^{-\mu|a-k|^{1/4}},\\
\mathbb{E}\left[\Vert \chi_l R_{E+iM}(A_{G'})\chi_b \Vert \right] &\leq C e^{-\mu|l-b|^{1/4}} \label{eq:Pf2CtRan2}
\end{align}
for $a,b\in\R^d$. Estimating \eqref{eq:Pf2CtRan1} via \eqref{eq:Pf2CtRan2} then implies
\begin{align}
F_M &\leq C' \sum_{\substack{k\in (G'\setminus G)_+\\ l\in G_+}} e^{-\mu|a-k|^{1/4}/3}e^{-\mu|k-l|/3}e^{-\mu|l-b|^{1/4}/3}\notag\\
&\leq C'' e^{-\mu' (\dist(a,\partial_{G'}G)^{1/4}+ \dist(b,\partial_{G'}G)^{1/4})}
\end{align}
for constants $C',C'',\mu'>0$. If $b\in G$ with $Q_b\subset G$ and $a \in G'$ the proof follows along the same lines.
\end{proof}

\subsection{Proof of Theorem \ref{lem:AuxRes1} under assumption $(i)$}
\label{sec:HelfSjo}

The following argument is essentially contained in \cite{MR1937430}. 

Via the Helffer-Sj\"ostrand formula we first rewrite the left-hand side of \eqref{lem:AuxRes1Stat} in terms of the resolvents of $A_G$ and $A_{G'}$.
In one of its standard formulations the Helffer-Sj\"ostrand formula states that for a selfadjoint operator $A$ and a compactly supported function $f\in \mathcal{C}_c^n(\R)$, $n\geq 2$, the operator $f(A)$ can be written as
\begin{equation}
\label{eq:HelfSjo1}
f(A) = \frac{1}{2\pi} \int_{\R^2} \mathrm{d}x\mathrm{d}y\, \omega_f(x,y) R_{x+iy}(A),
\end{equation}
where $\omega_f := (\partial_x+i\partial_y) \widetilde{f} $ and $\widetilde{f}$ is a quasi-analytic continuation of $f$, see e.g. \cite{MR1349825}. Moreover, $\widetilde{f}$ can be chosen such that 
\begin{align}
& |\omega_{f}(x,y)|\leq C|y|^{n-1}, \label{eq:HelfSjo11} \\
& \supp(\omega_{f})\subseteq\big(\supp(f)+[-1,1]\big) \times i[-1,1] \label{eq:HelfSjo12},
\end{align}
where the constant $C$ only depends on $f$ and $n$.

Let $h$ be as in Theorem \ref{lem:AuxRes1} and let $n:=\lfloor 2\widetilde{q}+2 \rfloor$. Because $h\in \mathcal{C}^n_c(\R)$ we can choose a quasi-analytic continuation $\widetilde{h}_n$ such that $\omega_{h,n}:=(\partial_x+i\partial_y) \widetilde{h}_n$ meets \eqref{eq:HelfSjo11} and \eqref{eq:HelfSjo12}.
For open subsets $G\subset G' \subseteq \R^d$ and $a,b\in G'$ such that $Q_a\subset G$ the Helffer Sj\"ostrand formula gives   
\begin{align}
\label{eq:PfLemGFB1}
&\chi_a \{h(A_G) - h(A_{G'})\}\chi_b \notag\\ 
&\quad=  \frac{1}{2\pi} \int_{\R^2} \mathrm{d}x\mathrm{d}y\, \omega_{h,n}(x,y) \chi_a\{R_{x+iy}(A_G)-R_{x+iy}(A_{G'})\} \chi_b \notag\\
&\quad =:\frac{1}{2\pi} \int_{\R^2} \mathrm{d}x\mathrm{d}y\, \omega_{h,n}(x,y) T_{x+iy}^{a,b}(G,G'),
\end{align}
where we have abbreviated 
\begin{equation}
T_{x+iy}^{a,b}(G,G') := \chi_a\{R_{x+iy}(A_G)-R_{x+iy}(A_{G'})\} \chi_b.
\end{equation}
Upon averaging both sides of \eqref{eq:PfLemGFB1} we obtain the bound
\begin{equation}
\label{eq:PfLemGFB2}
\begin{aligned}
&\left\Vert\mathbb{E}\left[\chi_a \{h(A_G) - h(A_{G'})\}\chi_b\right]\right\Vert \leq \frac{1}{2\pi} \int_{\R^2} \mathrm{d}x\mathrm{d}y\, \vert\omega_{h,n}(x,y)\vert \big\Vert\mathbb{E}\big[T_{x+iy}^{a,b}(G,G')\big] \big\Vert.
\end{aligned}
\end{equation}
Lemma \ref{lem:CTRan2} implies that for $0<\theta<1/2$ there exist constants $C',\mu>0$ such that 
\begin{equation}
\label{eq:PfLemGFB4}
\begin{aligned}
&\big\Vert\mathbb{E}\big[T_{z}^{a,b}(G,G')\big] \big\Vert \leq \frac{C'}{\distt(z,\widehat{\Sigma})} e^{-\mu\distt(z,\widehat{\Sigma}) \left(\distt(a,\partial_{G'}G)^{\theta}+\distt(b,\partial_{G'}G)^{\theta}\right)}
\end{aligned}
\end{equation}
holds for $z\in\C \setminus \widehat{\Sigma}$.
Estimating the right-hand side of \eqref{eq:PfLemGFB2} by \eqref{eq:PfLemGFB4} yields
\begin{equation}
\label{eq:PfLemGFB5}
\begin{aligned}
&\left\Vert\mathbb{E}\left[\chi_a \{h(A_G) - h(A_{G'})\}\chi_b\right]\right\Vert\\
&\qquad \leq C{''} \int_{-1}^1 \mathrm{d}y\, |y|^{n-2} e^{-\mu|y| \left(\distt(a,\partial_{G'}G)^{\theta}+\distt(b,\partial_{G'}G)^{\theta}\right)},
\end{aligned}
\end{equation}
where we also used \eqref{eq:HelfSjo11} and \eqref{eq:HelfSjo12}.
A change of variables shows that
\begin{equation}
\label{eq:PfLemGFB8}
\eqref{eq:PfLemGFB5} \leq \frac{C'''}{\distt(a,\partial_{G'}G)^{(n-1)\theta}+\distt(b,\partial_{G'}G)^{(n-1)\theta}},
\end{equation}
where the constant $C'''$ depends on $\theta$. Because $n-1 = \lfloor 2\widetilde{q}+2 \rfloor -1 > 2\widetilde{q}$ we can choose $\theta<1/2$ such that $(n-1)\theta > \widetilde{q}$. 

\qed

\section{Proof of Theorem \ref{th:SzegoAsymptAvg}}
\label{sec:ProofTheorem}

The proof of the theorem consists of two parts. In the first part, which is purely algebraic, we rewrite $\mathbb{E}\left[\Tr\left( h(g(H)_{\Lambda_L}) \right)\right]$ via the transformations $\{T_j\}_{j\in\Z^d}$, $\{P_\pi\}_{\pi\in\mathcal{S}^d}$ and $\{R_{\sigma}\}_{\sigma\in\mathcal{R}^d}$ as 
\begin{equation}
\label{eq:ToProveStep1}
\mathbb{E}\big[ \Tr\big( h(g(H)_{\Lambda_L}) \big) \big] = \sum_{m=0}^{d} (2L)^{d-m} A^{(L)}_m  + \mathcal{E}^{(L)},
\end{equation}
where the $A_m^{(L)}$ are finite-volume versions of the coefficients $A_m$ from \eqref{cor:SzegoAsymptAvgstat2} and $\mathcal{E}^{(L)}$ is an error term. In this part of the proof we work with the non-averaged quantities $\Tr\left(h(g(H)_G)\right)$ as long as possible. For a concrete model such as the random Anderson model, and additional (model-specific) assumptions, the pointwise formula \eqref{pf:SzegoP1P2,15} would be the starting point for an almost sure pointwise or stochastic asymptotic analysis beyond the results from \cite{PasturKirschErgodic}. In the second part we then apply Theorem \ref{lem:AuxRes1} to show that the coefficients $A_m$ are well-defined for $\widetilde{q}>2d$ and that there exist constants $C,C'$ such that
\begin{align}
\label{eq:ToProveStep2}
\vert A^{(L)}_m - A_{m} \vert &\leq C L^{2m-\widetilde{q}},\\	
\label{eq:ToProveStep2.2}
\vert \mathcal{E}^{(L)}\vert &\leq C' L^{d-\widetilde{q}}.
\end{align} 
At the end of the section a short calculation verifies formula \eqref{cor:rmk1} in Remark \ref{rem:SzAsym1}.

\subsection{First part of the proof}

We recall the definitions $\{T_j\}_{j\in\Z^d}$, $\{P_\pi\}_{\pi\in\mathcal{S}^d}$ and $\{R_{\sigma}\}_{\sigma\in\mathcal{R}^d}$ from \eqref{def:TheModel2},\ref{assV2} and \ref{assV3}, respectively.
For the whole first part we choose a fixed length scale $L\in \N$.
To shorten notation we use the shortcuts $f_{n}$, $n=0,...,d$, introduced in \eqref{th:SzegoAsymptAvgstat3}. In the same vein we abbreviate
\begin{equation}
\label{eq:ShortCut1}
f^{T}_{n} := h\big(g( H^{T} )_{\mathbb{R}_{\geq 0}^n\times \R^{d-n}}\big)
\end{equation}
for a transformation $T:\Omega\to\Omega$, where $H^T$ is the random operator obtained from $H^{T}_\omega:= H_{T\omega}$, $\omega\in\Omega$. We first decompose the cube $\Lambda_L$ of side length $2L$ into $2^d$ disjoint subcubes
\begin{equation}
\Lambda_{L/2}(\sigma):= \{x\in\R^d:\, x_{\sigma} \in [-L,0]^d\}, \qquad \sigma\in\mathcal{R}^d, 
\end{equation}
of side length $L$, where $x_{\sigma}:= ((-1)^{\sigma_1} x_1,...,(-1)^{\sigma_d} x_d)$. Under the m.p. transformation $R_{\sigma}$ from assumption \ref{assV3} the difference of the operators $h(g(H)_{\Lambda_L})$ and $f_0=(h\circ g)(H)$ transforms as
\begin{align}
U_{\sigma}\chi_{\Lambda_{L/2}(\sigma)} \{ h(g(H)_{\Lambda_L})-f_0 \} U_{\sigma} &=  \chi_{[-L,0]^d}
U_{\sigma}\{ h(g(H)_{\Lambda_L})-f_0 \}U_{\sigma} \notag \\
&= \chi_{[-L,0]^d} \{ h(g(H^{R_{\sigma}})_{\Lambda_L})-f^{R_{\sigma}}_0 \} \label{eq:PfStep1,1}
.
\end{align}
Via the unitary transformations $\{U_{j}\}_{j\in\Z^d}$ we can further rewrite the right-hand side of \eqref{eq:PfStep1,1} as
\begin{align}
&\chi_{[-L,0]^d} \{h(g(H^{R_{\sigma}})_{\Lambda_L})-f^{R_{\sigma}}_0\} \notag\\ 
&\quad= U^*_L\chi_{[0,L]^d} \{h(g( H^{T_L R_\sigma} )_{[0,2L]^d})-f_{0}^{T_LR_\sigma}\}U_L \label{eq:PfStep1,2},
\end{align}
where $U_L$ and $T_L$ are a short-cut for $U_{(L,...,L)}$ and $T_{(L,...,L)}$, respectively. After combining \eqref{eq:PfStep1,1} and \eqref{eq:PfStep1,2} we take the trace and sum over $\sigma\in\mathcal{R}^d$ to arrive at
\begin{equation}
\label{pf:Formula1}
\begin{aligned}
& \Tr\left( \chi_{\Lambda_L} \{ h(g(H)_{\Lambda_L})-f_0 \}  \right)\\  
&\qquad\qquad= \sum_{\sigma\in\mathcal{R}^d} \Tr\left(\chi_{[0,L]^d} \{h(g(H^{T_L R_\sigma})_{[0,2L]^d})-f_{0}^{{T_LR_\sigma}}\} \right). 
\end{aligned}
\end{equation}
With the error term
\begin{equation}
\label{pf:Error}
\mathcal{E}^{(L)} := \sum_{\sigma\in\mathcal{R}^d}  \Tr\left(\chi_{[0,L]^d} \{h(g(H^{T_LR_\sigma})_{[0,2L]^d})-f_{d}^{{T_LR_\sigma}}\} \right)
\end{equation}
the formula \eqref{pf:Formula1} reads 
\begin{equation}
\label{pf:Formula2}
\eqref{pf:Formula1} = \sum_{\sigma\in\mathcal{R}^d} \Tr\left( \chi_{[0,L]^d} \{ f_{d}^{T_L R_\sigma} - f_{0}^{T_L R_\sigma} \} \right)   + \mathcal{E}^{(L)}.
\end{equation}
So far we reduced the problem to a corner of the cube of linear size $L$ and absorbed the effect of those boundary parts of $\Lambda_L$ into an error term that are far apart from the corner under consideration. Let's continue by decomposing the box $[0,L]^d$ as
\begin{equation}
\label{pf:SzegoDecompBox}
[0,L]^d = \bigcup_{\pi\in \mathcal{S}^d} \{ x\in [0,L]^d:\, x_{\pi(1)}\leq ...\leq x_{\pi(d)}\},
\end{equation}
where the union is disjoint up to a set of Lebesgue-measure zero. The single sets on the right-hand side of \eqref{pf:SzegoDecompBox} can be transformed into each other via relabeling coordinates: If we set 
\begin{equation}
\label{pf:SzegoP1P2,1}
\chi_{L,\pi}:=\chi_{[0,L]^d}\chi_{\{x_{\pi(1)}\leq...\leq x_{\pi(d)}\}}
\end{equation}
for $\pi\in \mathcal{S}^d$, then
\begin{equation}
\label{pf:SzegoP1P2,2}
\chi_{L,\pi} = U_{\pi} \chi_{L,\idd} U_{\pi}^*
\end{equation}
(where '$\idd$' here stands for the neutral element in $\mathcal{S}^d$). We extend the shortcut \eqref{eq:ShortCut1} as follows. For a transformation $T:\Omega\to\Omega$ we set 
\begin{equation}
\label{pf:SzegoP1P2,3}
f_{n,\pi}^T := h\big( U_\pi \chi_{(\R_{\geq 0}^n\times \R^{d-n})} U_{\pi}^* g( H^T ) U_\pi \chi_{(\R_{\geq 0}^n\times \R^{d-n})} U_{\pi}^* \big),
\end{equation}
i.e. $f_{n,\idd}^T=f_{n}^T$ as operators on $L^2(\R^d)$. Moreover, $f_{d,\pi}^T=f_{d}^T$ and $f_{0,\pi}^T=f_{0}^T$ hold for any $\pi\in\mathcal{S}^d$. Via a telescopic expansion we arrive at 
\begin{align}
&\Tr\left( \chi_{[0,L]^d} \{ f_{d}^{T_L R_\sigma} - f_{0}^{T_L R_\sigma} \}  \right)  \notag\\
&\qquad = \sum_{\pi\in\mathcal{S}^d} \sum_{n=1}^d \Tr\left( \chi_{L,\pi} \{ f_{n,\pi}^{T_L R_\sigma} - f_{n-1,\pi}^{T_L R_\sigma} \} \right). \label{pf:SzegoP1P2,4}
\end{align}
For $n,l=1,...,d$ we define 
\begin{equation}
\label{pf:SzegoP1P2,5}
\vec{K}^d_{n,l}:= \big\{\vec{k}=(k_1,...,k_{n-1}):\, k_i\in \{1,...,d\}\setminus\{l\},\, k_i\neq k_j (i\neq j)\big\},
\end{equation}
and for $\vec{k}=(k_1,...,k_{n-1})\in \vec{K}^d_{n,l}$
\begin{equation}
\label{pf:SzegoP1P2,6}
\mathcal{S}_n^d(\vec{k},l) := \{\pi\in\mathcal{S}^d:\, (\pi(1),...,\pi(n))=(k_1,...,k_{n-1},l)\} \subseteq \mathcal{S}^d.
\end{equation}
For fixed $n=1,...,d$ the sets $\mathcal{S}_{n}^d(\vec{k},l)$, $l\in \{1,,,.d\}$ and $\vec{k}\in \vec{K}^d_{n,l}$, form a disjoint partition of $\mathcal{S}^d$. Hence \eqref{pf:SzegoP1P2,4} can be written as
\begin{equation}
\label{pf:SzegoP1P2,7}
\eqref{pf:SzegoP1P2,4} = \sum_{n=1}^d\sum_{l=1}^d \sum_{\vec{k} \in \vec{K}^d_{n,l}} \sum_{\pi\in \mathcal{S}^d_n(\vec{k},l)} \Tr\left( \chi_{L,\pi} \{f_{n,\pi}^{T_LR_\sigma}-f_{n-1,\pi}^{T_LR_\sigma}\} \right).
\end{equation}
For fixed $\vec{k},l$ we choose an arbitrary but fixed $\pi_0=\pi_0(\vec{k},l)\in\mathcal{S}^d$ such that $\pi_0^{-1} \in \mathcal{S}_n^d(\vec{k},l)$ and calculate for $\pi \in \mathcal{S}_n^d(\vec{k},l)$
\begin{align}
f_{n,\pi}^{T_L R_\sigma} &= h\big(U_{\pi} \chi_{(\R_{\geq 0}^n\times \R^{d-n})} U_{\pi}^* g(U_{L}U_{\pi_0}^*H^{P_{\pi_0}R_\sigma}U_{\pi_0}U_{L}^*)U_{\pi} \chi_{(\R_{\geq 0}^n\times \R^{d-n})}U_{\pi}^* \big)\notag\\
&= U_{\pi_0}^* h\big( \chi_{(\R_{\geq 0}^n\times \R^{d-n})}  g(U_L H^{P_{\pi_0}R_\sigma}U_L^*) \chi_{(\R_{\geq 0}^n\times \R^{d-n})} \big) U_{\pi_0}\notag\\
&=U_{\pi_0}^* f_{n,\idd}^{T_L P_{\pi_0}R_\sigma} U_{\pi_0}.
\label{pf:SzegoP1P2,8}
\end{align}
Here we used that $U_L$ commutes with $U_{\pi}$, $\pi\in \mathcal{S}^d$, and
\begin{equation}
\label{pf:SzegoP1P2,9}
U_{\pi_0\circ\pi} \chi_{(\R_{\geq 0}^n\times \R^{d-n})} U_{\pi_0\circ\pi}^* = \chi_{(\R^n_{\geq 0}\times \R^{d-n})}.
\end{equation}
Since $(\pi_0\circ \pi)(n) = n$ a similar calculation can be performed for $f_{n-1,\pi}^{T_LR_\sigma}$. Combining them yields
\begin{equation}
\label{pf:SzegoP1P2,10}
\Tr\left( \chi_{L,\pi} \{f_{n,\pi}^{T_LR_\sigma}-f_{n-1,\pi}^{T_LR_\sigma}\} \right)=\Tr\left(  U_{\pi_0}\chi_{L,\pi}U_{\pi_0}^* \{ f_{n}^{T_LP_{\pi_0}R_\sigma}-f_{n-1}^{T_LP_{\pi_0}R_\sigma} \}  \right).
\end{equation}
Via the inclusion-exclusion principle we rewrite the sum over $\pi\in \mathcal{S}_n^d(\vec{k},l)$ of the operators $U_{\pi_0}\chi_{L,\pi}U_{\pi_0}^*$ as
\begin{align}
\sum_{\pi\in \mathcal{S}^d_n(\vec{k},l)}  U_{\pi_0}\chi_{L,\pi}U_{\pi_0}^* &= \chi_{[0,L]^d}\chi_{\{x_1\leq...\leq x_{n}\}} \prod_{i=n+1}^d \chi_{\{x_{n}\leq x_{i} \}} \label{pf:SzegoP1P2,11}\\
&= \chi_{[0,L]^d} \chi_{\{x_1\leq...\leq x_{n}\}} \prod_{i=n+1}^d \left(\chi_{\R^d} - \chi_{\{x_n\geq x_i\}} \right)\notag\\
&= \chi_{[0,L]^d} \chi_{\{x_1\leq...\leq x_{n}\}} \sum_{j=0}^{d-n} (-1)^j \sum_{\substack{\mathcal{M}\subseteq \{n+1,...,d\}:\\ |\mathcal{M}|=j}} \chi_{\{\forall t\in\mathcal{M}:\,x_n\geq x_t\}}.\notag 
\end{align}
For the $j=0 $ summand the second sum is interpreted as $\chi_{\R^d}$. By summing \eqref{pf:SzegoP1P2,10} over $\pi\in \mathcal{S}_n^d(\vec{k},l)$ we obtain
\begin{align}
&\sum_{\pi\in \mathcal{S}^d_n(\vec{k},l)} \Tr\left( \chi_{L,\pi} \{f_{n,\pi}^{T_L R_\sigma}-f_{n-1,\pi}^{T_LR_\sigma}\} \right) \notag\\
&\qquad= \sum_{j=0}^{d-n} (-1)^j \sum_{\substack{\mathcal{M}\subseteq \{n+1,...,d\}:\\ |\mathcal{M}|=j}} \Tr\left(\chi_{L,n,\mathcal{M}} \{ f_{n}^{T_LP_{\pi_0}R_\sigma}-f_{n-1}^{T_LP_{\pi_0}R_\sigma} \}  \right),
\end{align}
 where we abbreviated
\begin{equation}
\label{pf:SzegoP1P2,13}
\chi_{L,n,\mathcal{M}} := \chi_{[0,L]^d}\chi_{\{x_1\leq...\leq x_{n}\}} \chi_{\{\forall t\in\mathcal{M}:\,x_n\geq x_t\}}.
\end{equation}
Let's summarize the above calculation. For $n,m=1,...,d$ and $j=0,...,d-n$ we define
\begin{align}
b_{n,j}^{(L)} &:= (-1)^j \sum_{\sigma\in\mathcal{R}^d}\sum_{l=1}^d \sum_{\vec{k} \in \vec{K}^d_{n,l}} \sum_{\substack{\mathcal{M}\subseteq \{n+1,...,d\}:\\ |\mathcal{M}|=j}} \Tr\left(\chi_{L,n,\mathcal{M}} \{ f_{n}^{T_LP_{\pi_0}R_\sigma}-f_{n-1}^{T_LP_{\pi_0}R_\sigma} \}  \right),\\
b_{m}^{(L)} &:= \sum_{n=1}^m b_{n,m-n}^{(L)}. \label{pf:SzegoP1P2,14}
\end{align}
Our above calculation then shows that
\begin{equation}
\label{pf:SzegoP1P2,15}
\Tr\left( \chi_{\Lambda_L} \{ h(g(H)_{\Lambda_L})-f_0 \} \right) = \sum_{m=1}^d b_m^{(L)} + \mathcal{E}^{(L)}.
\end{equation}
Now we take expectations and exploit that $P_{\pi_0}$ and $R_\sigma$ are m.p. transformations. For $m=1,...,d$, $n=1,...,m$ and $\mathcal{M} \subset \{n+1,...,d\}$ a set of size $|\mathcal{M}|=m-n$ as appearing in the coefficients $b_{n,m-n}^{(L)}$ his yields
\begin{align}
\mathbb{E}\left[\Tr\left(\chi_{L,n,\mathcal{M}} \{ f_{n}^{T_LP_{\pi_0}R_\sigma}-f_{n-1}^{T_LP_{\pi_0}R_\sigma} \}  \right) \right] &= \mathbb{E}\left[\Tr\left(\chi_{L,n,\mathcal{M}} \{ f_{n}-f_{n-1} \}  \right) \right] \label{pf:SzegoP1P2,16}\\
&= \mathbb{E}\left[\Tr\left(\chi_{L,n,\mathcal{M}_{m,n}} \{ f_{n}-f_{n-1} \}  \right) \right]\notag, 
\end{align}
where in the second step we substituted the set $\mathcal{M}$ by $\mathcal{M}_{m,n}:=\{n+1,...,m\}$ (with $\mathcal{M}_{m,m}=\emptyset$). This is possible because the m.p. transformation $T_\mathcal{M}$ associated to the unitary operator $U_\mathcal{M}$, which acts via relabeling the coordinates indexed by $\mathcal{M}$ into those indexed by $\mathcal{M}_{m,n}$, satisfies
\begin{equation}
\label{pf:SzegoP1P2,17}
U_\mathcal{M}\{f_{n}-f_{n-1}\} U_\mathcal{M}^* = f^{T_\mathcal{M}}_{n}-f^{T_\mathcal{M}}_{n-1}.
\end{equation}
The right-hand side of \eqref{pf:SzegoP1P2,16} now is independent of $\sigma\in\mathcal{R}^d$, $l=1,...,d$ and $\vec{k} \in \vec{K}_{n,l}^d$ and therefore
\begin{equation}
\label{pf:SzegoP1P2,18}
\mathbb{E} \big[ b_{n,m-n}^{(L)} \big] =  \frac{(-1)^{m-n} 2^dd!}{(m-n)!(d-m)!} \mathbb{E}\left[\Tr\left(\chi_{L,n,\mathcal{M}_{m,n}} \{ f_{n}-f_{n-1} \}  \right) \right].
\end{equation}
Hence, if we set
\begin{equation}
\label{pf:SzegoP1P2,19}
c_{m,n}:= \frac{(-1)^{m-n} 2^{m}d!}{(m-n)!(d-m)!},
\end{equation}
we arrive at
\begin{equation}
\label{pf:SzegoP1P2,20}
\mathbb{E}\big[b_{m}^{(L)}\big] = 2^{d-m}\sum_{n=1}^m c_{m,n} \mathbb{E}\big[\Tr\left(\chi_{L,n,\mathcal{M}_{m,n}} \{ f_{n}-f_{n-1} \}  \right) \big].
\end{equation}

For $1\leq n \leq m \leq d$ the operator on the right-hand side of \eqref{pf:SzegoP1P2,18}
is invariant under translations in the last $d-m$ coordinates. For a cube $Q_a\subset [0, L]^{d-m}$ of side-length $1$ and with center $a\in (\Z+1/2)^{d-m}$ this gives
\begin{align}
& \mathbb{E}\left[\Tr\left(\chi_{ L,n,\mathcal{M}_{m,n}}\chi_{\{(x_{m+1},...,x_{d})\in Q_a\}} \{ f_{n}-f_{n-1} \}\right)\right]\notag\\
&\quad =  \mathbb{E}\left[\Tr\left(\chi_{ L,n,\mathcal{M}_{m,n}}\chi_{\{(x_{m+1},...,x_{d})\in [0,1]^{d-m}\}} \{ f_{n}-f_{n-1} \}\right)\right].\label{pf:SzegoP1P2,22}
\end{align}
For $m=1,...,d$ we obtain 
\begin{align}
\mathbb{E}\big[b_{m}^{(L)}\big] =&  (2L)^{d-m} \sum_{\substack{n=1}}^m c_{m,n}  \mathbb{E}\left[\Tr\left(\chi_{ L,n,\mathcal{M}_{m,n}}\chi_{\{x_{m+1},...,x_{d}\in [0,1]\}} \{ f_{n}-f_{n-1} \}  \right) \right]\notag\\
=:& (2L)^{d-m} A^{(L)}_m \label{pf:SzegoP1P2,23}
\end{align}
and for $m=0$ we set
\begin{align}
A_0^{(L)} := \mathbb{E}\big[ \Tr\big( \chi_{[0,1]^d} f_{0} \big) \big].
\end{align}
This finishes the first part of the proof, which can be summarized as
\begin{equation}
\mathbb{E}\left[ \Tr\left( h(g(H)_{\Lambda_L}) \right) \right] = \sum_{m=0}^{d} (2L)^{d-m} A_{m}^{(L)}  + \mathbb{E}\big[\mathcal{E}^{(L)}\big].
\end{equation}

\subsection{Second part of the proof}

We start with proving that $\mathcal{E}^{(L)}$ defined in \eqref{pf:Error} is indeed a negligible error term. For a set $U\subseteq\R^d$ we recall the notation $U_+:= \{n\in (\mathbb{Z}+1/2)^d:\, Q_n\cap A \neq \emptyset\}$. Because of \ref{AssAg} and Lemma \ref{lem:AppTrClEst} we may interchange trace and expectation in \eqref{pf:Error} to obtain
\begin{align}
\big|\mathbb{E}\big[\mathcal{E}^{(L)}\big]\big| &=2^d\, \big\vert \Tr\big( \mathbb{E} \big[  \chi_{[0,L]^d} \{ h(g(H)_{[0,2L]^d})-f_{d} \} \big]\big) \big\vert \notag\\
&\leq 2^d\, \sum_{a\in ([0,L]^d)_+} \big\Vert\mathbb{E} \big[  \chi_{a} \{ h(g(H)_{[0,2L]^d})-f_d \} \chi_{a} \big]\big\Vert_1.\label{pf:P2F1}
\end{align}
Next, we apply estimate \eqref{lem:AuxRes1Stat} (which by assumption holds for $\widetilde{q}>2d$) with $G=[0,2L]^d$ and $G'=\mathbb{R}^d_{\geq 0}$, in which case $\distt([0,L]^d,\partial_{\mathbb{R}^d_{\geq 0}}[0,2L]^d) = L$. This implies that
\begin{equation}
\label{pf:P2F3}
\big\Vert \mathbb{E} \big[  \chi_a \{ h(g(H)_{[0,2L]^d})-f_d \} \chi_a   \big]\big\Vert_1 \leq  C L^{-\widetilde{q}}
\end{equation}
holds for $a\in([0,L]^d)_+$, and consequently
\begin{equation}
\label{pf:P2F4}
\vert\mathbb{E}\big[\mathcal{E}^{(L)}\big]\vert \leq C' L^{d-\widetilde{q}}.
\end{equation}
Now let us turn to \eqref{eq:ToProveStep2}. We first introduce the abbreviation 
\begin{equation}
\widehat{\chi}_{L,m,n} := \chi_{ L,n,\mathcal{M}_{m,n}}\chi_{\{x_{m+1},...,x_{d}\in [0,1]\}}
\end{equation}
for $L\in\N\cup\{\infty\}$, and $1\leq n \leq m \leq d$. We note that $\widehat{\chi}_{\infty,m,n} = \widehat{\chi}_{m,n}$, where the latter operators were defined in \eqref{def:const1}.
The natural limiting candidates for the coefficients $A_m^{(L)}$ defined in \eqref{pf:SzegoP1P2,23} are
\begin{align}
\label{def:AkInf1}
A_{m} &:= \sum_{n=1}^m c_{m,n}  \Tr\left(\mathbb{E}\left[\widehat{\chi}_{\infty,m,n} \{ f_{n}-f_{n-1} \} \widehat{\chi}_{\infty,m,n} \right] \right).
\end{align}
Here we exchanged the order of trace and expectation to ensure that the coefficients $A_m$ are well-defined via the bound \eqref{lem:AuxRes1Stat} and the calculation below. To prove convergence of $A_m^{(L)}$ towards $A_{m}$ we prove that the single summands which contribute to $A_m^{(L)}$ converge towards their respective infinite-volume counterparts. For brevity we abbreviate for $1\leq n \leq m \leq d$
\begin{align}
A_{m,n}^{(L)} &:= \Tr\left(\mathbb{E}\left[\widehat{\chi}_{L,m,n} \{ f_{n}-f_{n-1} \}\widehat{\chi}_{L,m,n} \right] \right),\\
A_{m,n} &:= \Tr\left(\mathbb{E}\left[\widehat{\chi}_{\infty,m,n} \{ f_{n}-f_{n-1} \}\widehat{\chi}_{\infty,m,n} \right] \right).
\end{align}
We first prove that that the operator $\mathbb{E}\left[\widehat{\chi}_{\infty,m,n} \{ f_{n}-f_{n-1} \}\widehat{\chi}_{\infty,m,n} \right]$ is trace class. The trace norm of this operator can be estimated via the operator kernel of $f_n-f_{n-1}$ as
\begin{align}
&\left\Vert\mathbb{E}\left[\widehat{\chi}_{\infty,m,n} \{ f_{n}-f_{n-1} \}\widehat{\chi}_{\infty,m,n} \right]\right\Vert_1 \notag\\
&\quad\leq\sum_{\substack{a\in(\R_{\geq 0}^d)_+:\\  \widehat{\chi}_{\infty,m,n}\chi_a\neq 0}}
\hspace{0.5cm}\sum_{\substack{b\in(\R_{\geq 0}^d)_+:\\  \widehat{\chi}_{\infty,m,n}\chi_b\neq 0}} \left\Vert\mathbb{E}\left[ \chi_a \lbrace f_{n}-f_{n-1}\rbrace\chi_{b} \right] \right\Vert_1 \notag\\
&\quad\leq C_1 \Big(\sum_{\substack{a\in(\R_{\geq 0}^d)_+:\\  \widehat{\chi}_{\infty,m,n}\chi_a\neq 0}}\frac{1}{\left(|a_n|+1\right)^{\widetilde{q}/2}}\Big)^2, \label{eq:PfAsymptWell1}
\end{align}
where we used the assumption that \eqref{lem:AuxRes1Stat} holds for $\widetilde{q}$ and that 
\begin{equation}
\partial_{\R_{\geq 0}^{n-1}\times \R^{d-(n-1)}}\big(\R_{\geq 0}^n\times \R^{d-n}\big) = \R_{> 0}^{n-1} \times \{0\}\times \R^{d-n}.
\end{equation}
By definition of the operator $\widehat{\chi}_{\infty,m,n}$ the right-hand side of \eqref{eq:PfAsymptWell1} can be estimated by
\begin{align}
\sqrt{\eqref{eq:PfAsymptWell1}} \leq C_2 \sum_{a_{n}\in\mathbb{N}} \sum_{\substack{a_1,...,a_{n-1}\in\mathbb{N}_0:\\a_i \leq a_{n}+1}} \sum_{\substack{a_{n+1},...,a_{m}\in\mathbb{N}_0:\\ a_i \leq a_{n}+1}} a_n^{-\widetilde{q}/2} \leq C_3 \sum_{a_n\in\mathbb{N}} a_n^{m-1-\widetilde{q}/2},\label{eq:PfAsymptWell2}
\end{align}
which is finite for $\widetilde{q}>2m$. Finally we prove that $|A_m^{(L)}-A_m| = \mathcal{O}(L^{2m-\widetilde{q}})$. We proved above that the operator $T_{m,n}:=\mathbb{E}\left[\widehat{\chi}_{\infty,m,n} \{ f_{n}-f_{n-1} \}\widehat{\chi}_{\infty,m,n} \right]$ is trace class. Cyclicity of the trace then yields
\begin{align}
\vert A_{m,n}^{(L)}-A_{m,n} \vert &= \big\vert  \Tr\big( \chi_{[0,L]^d}T_{m,n} \big) - \Tr\big( T_{m,n} \big) \big\vert \label{pf:FinalPart2}\\
&= \big\vert  \Tr\big( (\chi_{\mathbb{R}^d_{\geq 0}}-\chi_{[0,L]^d}) T_{m,n} (\chi_{\mathbb{R}^d_{\geq 0}}-\chi_{[0,L]^d}) \big) \big\vert \notag\\
&\leq \big\Vert\mathbb{E} \big[  \widehat{\chi}_{\infty,m,n} (\chi_{\mathbb{R}^d_{\geq 0}}-\chi_{[0,L]^d}) \lbrace f_{n}-f_{n-1}\rbrace  (\chi_{\mathbb{R}^d_{\geq 0}}-\chi_{[0,L]^d}) \widehat{\chi}_{\infty,m,n}  \big]\big\Vert_1.\notag 
\end{align}
 With estimates as for \eqref{eq:PfAsymptWell1} and \eqref{eq:PfAsymptWell2} we arrive at
\begin{align}
\eqref{pf:FinalPart2} &\leq C_4  \left(\sum_{a_n = L-1}^\infty a_n^{m-1-\widetilde{q}/2}\right)^2 \leq C_5 L^{2m-\widetilde{q}}. \label{pf:FinalPart8} 
\end{align}
\qed

\subsection{Proof of Remark \ref{rem:SzAsym1}} \label{sec:MainPfPt3}

Because of the analysis in the second part of the proof
\begin{align}
A_{m} = \lim_{\substack{L\to\infty \\ L\in \N}} A_{m}^{(L)}  = \lim_{\substack{L\to\infty \\ L\in \N}} \sum_{n=1}^m c_{m,n}  \mathbb{E}\left[\Tr\left(\widehat{\chi}_{L,m,n} \{ f_{n}-f_{n-1} \} \right)\right] 
\label{eq:Part3E1}
\end{align}
with constants $c_{m,n}$ defined in \eqref{pf:SzegoP1P2,19}.
The presence of the finite-volume projection now allows to rearrange terms in the above sum. This leads to
\begin{align}
A^{(L)}_m =& c_{m,m} \mathbb{E}\left[\Tr\left(\widehat{\chi}_{L,m,m} f_{m}\right) \right] - c_{m,1} \mathbb{E}\left[\Tr\left(\widehat{\chi}_{L,m,1} f_{0} \right)\right]\notag\\
 &+ \sum_{n=1}^{m-1} \big(c_{m,n} \mathbb{E}\left[\Tr\left( \widehat{\chi}_{L,m,n} f_{n} \right)\right] - c_{m,n+1} \mathbb{E}\left[\Tr\left( \widehat{\chi}_{L,m,n+1} f_{n}\big) \right]\right). \label{eq:Part3E2}
\end{align}
We next use that $f_{n}$ is invariant under permutation of the first $n$ and last $d-n$ coordinates to find that for $1\leq n \leq m-1$
\begin{align}
\label{eq:Part3E3}
&\mathbb{E}\left[\Tr\left(\widehat{\chi}_{L,m,n} f_{n} \right)\right] \notag\\
&\quad=\frac{1}{n!} \sum_{k=1}^n \mathbb{E} \left[ \Tr\left( f_n \chi_L \chi_{\{x_{m+1},...,x_{d}\in[0,1]\}}  \chi_{\{x_{k} \geq x_{1},...,x_{m} \}}\right) \right], \\
&\mathbb{E}\left[ \Tr\left(\chi_{L} \chi_{L,m,n+1} f_{n} \right)\right]\notag\\
&\quad= \frac{1}{n!(m-n)} \sum_{k=n+1}^{m} \mathbb{E} \left[\Tr\left( f_n \chi_L \chi_{\{x_{m+1},...,x_{d}\in[0,1]\}}  \chi_{\{x_{k} \geq x_{1},...,x_{m} \}}\right) \right].
\end{align}
For the constants $c_{m,n}$ and $c_{m,n+1}$ the relation
\begin{equation}
\label{eq:Part3E4}
\frac{1}{n!} c_{m,n}  =  \frac{-1}{n!(m-n)} c_{m,n+1} = \frac{d!}{(-1)^{m-n}2^m n!(m-n)!(d-m)!} =: \widetilde{c}_{m,n}
\end{equation}
holds, which yields 
\begin{align}
&c_{m,n} \mathbb{E}\left[\Tr\left( \widehat{\chi}_{L,m,n} f_{n} \right)\right] - c_{m,n+1} \mathbb{E}\left[\Tr\left( \widehat{\chi}_{L,m,n+1} f_{n}\right) \right] \notag\\
&\qquad = \widetilde{c}_{m,n}  \mathbb{E} \left[\Tr\left( f_n \chi_L \chi_{\{x_{m+1},...,x_{d}\in[0,1]\}}\right)  \right].
\label{eq:Part3E5}
\end{align}
After performing similar calculations for the $n=0$ and the $n=d$ term appearing on the right-hand side of \eqref{eq:Part3E2}, we arrive at
\begin{align}
A^{(L)}_m &= \lim_{L\rightarrow\infty} \sum_{n=0}^{m} \widetilde{c}_{m,n} \mathbb{E} \left[\Tr\left( f_n \chi_L \chi_{\{x_{m+1},...,x_{d}\in[0,1]\}}  \right)\right] \label{eq:Part3E6}
\end{align}
\qed

\section*{Acknowledgements}
The author is very grateful to Alexander Sobolev, Bernhard Pfirsch and his PhD advisor Peter M\"uller for many illuminating discussions on this topic.


\newcommand{\noopsort}[1]{} \newcommand{\singleletter}[1]{#1}

\end{document}